\documentclass[a4paper,11pt,reqno]{amsart}
\usepackage{amsmath}
\usepackage{amssymb}
\usepackage{xcolor}

\usepackage{mathrsfs}
\renewcommand{\mathcal}{\mathscr}

\theoremstyle{plain} 
\newtheorem{theorem}{Theorem}[section]

\newtheorem{lemma}[theorem]{Lemma}

\newtheorem{remark}[theorem]{Remark}


\makeatletter
    
    \@addtoreset{equation}{section}
  \makeatother

\title[ERW in the triangular array setting]{The elephant random walk in the triangular array setting}
\thanks{R.R. thanks Keio University for its hospitality during multiple visits. M.T. and H.T. thank the Indian Statistical Institute for its hospitality. M.T. is partially supported by JSPS KAKENHI Grant Numbers JP19H01793, JP19K03514 and JP22K03333. H.T. is partially supported by JSPS KAKENHI Grant Number JP19K03514, JP21H04432 and JP23H01077.
}
\author{Rahul Roy}
\address{Indian Statistical Institute, New Delhi, India}
\email{rahul@isid.ac.in}
\author{Masato Takei}
\address{Department of Applied Mathematics, Faculty of Engineering, Yokohama National University, Yokohama, Japan}
\email{takei-masato-fx@ynu.ac.jp}
\author{Hideki Tanemura}
\address{Department of Mathematics, Keio University, Yokohama, Japan}
\email{tanemura@math.keio.ac.jp}


\begin{document}

\begin{abstract} 
Gut and Stadm\"{u}ller (2021, 2022) initiated the study of the elephant random walk with limited memory. Aguech and El Machkouri (2024) published a paper in which they discuss an extension of results by Gut and Stadtm\"{u}ller (2022) for an ``increasing memory" version of the elephant random walk without stops.
Here we present a formal definition of the process which has been hinted at Eq.~(2.2) in Gut and Stadtm\"{u}ller (2022). This definition is based on the triangular array setting. We give a positive answer to the open problem in Gut and Stadtm\"{u}ller (2022) for the elephant random walk, possibly with stops.
We also obtain 
the CLT for the supercritical case of this model.
\end{abstract}

\maketitle

\section{Introduction}
\label{sec:intro}

In recent years there has been a lot of interest in the study of the elephant random walk (ERW) since it was introduced by Sch\"{u}tz and Trimper \cite{SchutzTrimper04}. See the excellent thesis of Laulin \cite{Laulin22PhD} for a detailed bibliography. The standard ERW is described as follows. 
Let $p \in (0,1)$ and $s \in [0,1]$. We consider a sequence $X_1,X_2,\ldots$ of random variables taking values in $\{+1, -1\}$ given by
\begin{align}
X_1 = \begin{cases}
+1 & \text{with probability } s \\
-1 & \text{with probability } 1-s,
\end{cases}
\label{def:ERWfirststep}
\end{align}
$\{U_n: n \geq 1\}$ a sequence of independent random variables, independent of $X_1$, with $U_n$ having a uniform distribution over $\{1,  \ldots , n\}$ and, for $n \in \mathbb{N}:=\{ 1,2,\ldots\}$,
\begin{align}
 X_{n+1} = \begin{cases}
+X_{U_n} & \text{with probability } p\\
-X_{U_n} & \text{with probability } 1-p.
\end{cases}
\label{def:ERWNextsteps}
\end{align}
The ERW $\{W_n\}$ is defined by 
\begin{align}
W_n= \sum_{k=1}^n X_k \quad \mbox{for $n\in \mathbb{N}$.}
\label{def:ERWposition}
\end{align}

Gut and Stadm\"{u}ller \cite{GutStadtmuller21JAP,GutStadtmuller22SPL} studied a variation of this model which is described in Section 3.2 of \cite{GutStadtmuller21JAP}. Aguech and El Machkouri \cite{AguechElMachkouri24} also studied a similar variation of the model which is described in Section 2 of \cite{AguechElMachkouri24}.
We present two different formalization of the model. Gut and Stadm\"{u}ller \cite{GutStadtmuller22SPL}, Aguech and El Machkouri \cite{AguechElMachkouri24}, and our work are based on the first formalization. 

\noindent {\bf TRIANGULAR ARRAY SETTING.} 

Consider a sequence $\{m_n : n \in \mathbb{N}\}$ of positive integers satisfying 
\begin{align}
1 \leq m_n \leq n \quad \mbox{for each $n\in \mathbb{N}$.}
\label{cond:m_nGen}
\end{align}
Let $X_1,X_2,\ldots$ be the sequence defined by \eqref{def:ERWfirststep} and \eqref{def:ERWNextsteps}.
We define a triangular array of random variables $\{\{S^{(n)}_k : 1 \leq k \leq n\} : n \in \mathbb{N}\}$ as follows.
Let $\{Y^{(n)}_k : 1 \leq k \leq n\}$ be random variables with
\begin{align}
Y^{(n)}_k = \begin{cases}
X_{k} & \text{ for } 1 \leq k \leq m_n\\
X^{(n)}_k & \text{ for } m_n < k \leq n,
\end{cases}
\label{def:ERWincrsteps}
\end{align}
where, for $m_n < k \leq n$,
\begin{align}
X^{(n)}_k = \begin{cases}
+X_{U_{k, n}} & \text{with probability } p\\
-X_{U_{k, n}} & \text{with probability } 1-p.
\end{cases}
\label{def:ERWincrLatersteps}
\end{align}
Here ${\mathcal U}_n := \{U_{k, n}: m_n < k \leq n\}$ is an i.i.d. collection of uniform random variables over $\{1, \ldots , m_n\}$ and $\{{\mathcal U}_n: n \in \mathbb{N}\}$ is an independent collection.
Finally, for $1 \leq k \leq n$ let
\begin{align}
S^{(n)}_k := \sum_{i=1}^k Y^{(n)}_i.
\label{def:ERWincrAtTimek}
\end{align}
We note that for fixed $n \in \mathbb{N}$, the sequence $\{S^{(n)}_k: 1 \leq k \leq n\}$ is a random walk with increments in $\{+1, -1\}$. However the sequence $\{S^{(n)}_n: n \in \mathbb{N}\}$ does not have such a representation.
We study properties of the sequence $\{T_n : n\in \mathbb{N}\}$ given by
\begin{align} \label{eq:TnDef}
T_n := S^{(n)}_n.
\end{align}
Gut and Stadtm\"{u}ller \cite{GutStadtmuller22SPL} called the process $\{T_n : n\in \mathbb{N}\}$ the {\it ERW with gradually increasing memory}, where
\begin{align}
\lim_{n \to \infty} m_n =+\infty.
\label{cond:m_nIncreasing}
\end{align}

\noindent {\bf  LINEAR SETTING.} 

In this setting the ERW $W'_{n+1} := W'_{n} + Z_{n+1}$ is given by the 
increments 
\begin{align}
W'_1 = Z_{1} = \begin{cases}
+1 & \text{with probability } s \\
-1 & \text{with probability } 1-s,
\end{cases}
\end{align}
and 
\begin{align}
Z_{n+1} =  \begin{cases}
+Z_{V_n} & \text{with probability } p \\
-Z_{V_n} & \text{with probability } 1-p,
\end{cases}
\label{def:ERWincrLaterstepsAnother}
\end{align}
where $V_n$ is a uniform random variable over $\{1, \ldots , m_n\}$, and $\{V_n : n \in \mathbb{N}\}$ is an independent collection.

\begin{remark} \label{rem:Remark1.1}
We note here that the dependence structure in the definitions of $T_n$ and $W'_n$ are different and as such results obtained for $T_n$ need not carry to those obtained for $W'_n$. The error in Theorem 2 (3) of Aguech and El Machkouri \cite{AguechElMachkouri24} is due to the use of the linear setting for their equation (3.20), while working on the triangular array setting. In particular, there is a mistake in the expression of $\overline{M}_{\infty}$ in page 14 in \cite{AguechElMachkouri24}.
\end{remark}

In the next section we present the statements of our results, and in Sections \ref{sec:ProofTheoremCLT} and \ref{sec:ProofTheoremsLLN} we prove the results.
In Section \ref{sec:ERWStop} and thereafter,
we study similar questions about the ERW with stops and present our results. 

\section{Results for the ERW in the triangular array setting}
\label{sec:DefERWincrmemory}

Before we state our results, we give a short synopsis of the results  for the standard ERW $\{W_n\}$, see \cite{BaurBertoin16,Bercu18,Collettietal17a,Collettietal17b,GuerinLaulinRaschel23,KubotaTakei19JSP,Qin23}.
Let $\alpha:=2p-1$. 
\smallskip

\noindent
$\bullet$ For $\alpha \in (-1,1)$, 
\begin{align}
\lim_{n \to \infty} \dfrac{W_n}{n} = 0\quad\mbox{a.s. and in $L^2$.} \label{eq:ERWordinarySLLN}
\end{align}
$\bullet$ For $\alpha \in (-1,1/2)$,
\begin{align}
&\dfrac{W_n}{\sqrt{n}} \stackrel{\text{d}}{\to} N\left(0,\dfrac{1}{1-2\alpha}\right) \quad \mbox{as $n \to \infty$,}\label{eq:ERWordinarySubCLT} \\
&\limsup_{n \to \infty} \pm \dfrac{W_n}{\sqrt{2 n \log \log n}} =  \dfrac{1}{\sqrt{1-2\alpha}}\quad \mbox{a.s.} \label{eq:ERWordinarySubLIL}
\end{align}
$\bullet$ For $\alpha=1/2$,
\begin{align}
&\dfrac{W_n}{\sqrt{n \log n}} \stackrel{\text{d}}{\to} N(0,1) \quad \mbox{as $n \to \infty$,}
\label{eq:ERWordinaryCriticalCLT} \\
&\limsup_{n \to \infty} \pm \dfrac{W_n}{\sqrt{2 n\log n \log \log \log n}} = 1 \quad \mbox{a.s.} \label{eq:ERWordinaryCriticalLIL}
\end{align}
$\bullet$ For $\alpha \in (1/2,1)$, there exists a random variable $M$ s.t.
\begin{align}
\lim_{n \to \infty}\dfrac{W_n}{n^{\alpha}} = M \quad \mbox{a.s. and in $L^2$,} 
\label{eq:ERWordinarySuperLimit}
\end{align}
where
\[ E[M] = \dfrac{\beta}{\Gamma(\alpha+1)} ,\quad E[M^2]>0,\quad P(M \neq 0)=1, \]
and
\begin{align}
\dfrac{W_n-Mn^{\alpha}}{\sqrt{n}} \stackrel{\text{d}}{\to} N\left(0,\dfrac{1}{2\alpha-1}\right) \quad \mbox{as $n \to \infty$.} 
\label{eq:ERWordinarySuperCLT}
\end{align}


Our first result improves and extends Theorem 3.1 in \cite{GutStadtmuller22SPL}.

\begin{theorem} \label{thm:CLT} Let $p \in (0,1)$ and $\alpha=2p-1$. Assume that $\{m_n : n \in \mathbb{N}\}$ satisfies \eqref{cond:m_nGen}, \eqref{cond:m_nIncreasing} and 
\begin{align}
 \gamma_n := \dfrac{m_n}{n} \quad \mbox{and} \quad \lim_{n \to \infty} \gamma_n = \gamma \in [0,1].
 \label{cond:m_nStability}
\end{align}
\noindent (i) If $\alpha \in (-1,1/2)$ then
\begin{align}
\dfrac{\gamma_n T_n}{\sqrt{m_n}}
 \stackrel{\text{d}}{\to} N\left(0,\dfrac{\{\gamma+\alpha(1-\gamma)\}^2}{1-2\alpha}+\gamma(1-\gamma)\right) \quad \mbox{as $n \to \infty$.}
\label{thm:CLT(i)}
\end{align}
(ii) If $\alpha =1/2$ then 
\begin{align}
\dfrac{\gamma_n T_n}{\sqrt{m_n\log m_n}}
\stackrel{\text{d}}{\to} N\left(0,\dfrac{(1+\gamma)^2}{4}\right) \quad \mbox{as $n \to \infty$.} \label{thm:CLT(ii)}
\end{align}
(iii) If $\alpha \in (1/2,1)$ then
\begin{align}
\lim_{n \to \infty} \dfrac{\gamma_n T_n}{(m_n)^{\alpha}}
=\{\gamma+\alpha(1-\gamma)\}M
 \quad\mbox{a.s. and in $L^2$,}
 \label{thm:CLT(iii)a}
\end{align}
where $M$ is the random variable in \eqref{eq:ERWordinarySuperLimit}.
Moreover,
\begin{align}
&\dfrac{\gamma_n T_n- M \cdot \{\gamma_n + \alpha(1-\gamma_n)\} \cdot (m_n)^{\alpha}}{\sqrt{m_n}} \notag \\
&\stackrel{\text{d}}{\to} N\left(0,\dfrac{\{\gamma+\alpha(1-\gamma)\}^2}{2\alpha-1}+\gamma(1-\gamma)\right) \quad \mbox{as $n \to \infty$.} \label{thm:CLT(iii)b}
\end{align}
\end{theorem}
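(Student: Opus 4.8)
The plan is to condition on the $\sigma$-field $\mathcal{G}:=\sigma(X_k:k\ge 1)$ generated by the underlying standard ERW, which is independent of the fresh uniforms $\{\mathcal{U}_n\}$ and their associated coin flips. First I would record the exact decomposition of $T_n$. Writing $T_n=W_{m_n}+\sum_{k=m_n+1}^{n}X^{(n)}_k$ and using that, conditionally on $\mathcal{G}$, each $X^{(n)}_k$ (for $m_n<k\le n$) is an independent copy with conditional mean $E[X^{(n)}_k\mid\mathcal{G}]=\alpha\,W_{m_n}/m_n$, I obtain
$$\gamma_n T_n=\{\gamma_n+\alpha(1-\gamma_n)\}\,W_{m_n}+\gamma_n R_n,\qquad R_n:=\sum_{k=m_n+1}^{n}\Bigl(X^{(n)}_k-\alpha\tfrac{W_{m_n}}{m_n}\Bigr),$$
where, given $\mathcal{G}$, $R_n$ is a sum of $n-m_n$ centred, i.i.d., uniformly bounded random variables. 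This isolates the \emph{memory} contribution (a deterministic multiple of $W_{m_n}$, to which the classical ERW limit theorems \eqref{eq:ERWordinarySubCLT}--\eqref{eq:ERWordinarySuperCLT} apply along the index $m_n\to\infty$) from the \emph{fresh} contribution $\gamma_n R_n$.

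Next I would analyse $\gamma_n R_n$ conditionally on $\mathcal{G}$. Its conditional variance equals $\gamma_n(1-\gamma_n)\sigma_n^2$ with $\sigma_n^2=1-\alpha^2(W_{m_n}/m_n)^2$; since \eqref{eq:ERWordinarySLLN} gives $W_{m_n}/m_n\to 0$ a.s.\ for $\alpha\in(-1,1)$, one has $\mathrm{Var}(\gamma_n R_n/\sqrt{m_n}\mid\mathcal{G})\to\gamma(1-\gamma)$ a.s. The individual summands of $\gamma_n R_n/\sqrt{m_n}$ are bounded by $2\sqrt{m_n}/n\to 0$, so the conditional Lindeberg condition holds and the conditional CLT (Lindeberg--Feller for triangular arrays) yields $E[\exp(it\gamma_n R_n/\sqrt{m_n})\mid\mathcal{G}]\to \exp(-\tfrac12 t^2\gamma(1-\gamma))$ a.s., a \emph{deterministic} limit. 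For the critical normalisation in (ii) the same computation gives conditional variance $\gamma_n(1-\gamma_n)\sigma_n^2/\log m_n\to 0$, so the fresh term $\gamma_n R_n/\sqrt{m_n\log m_n}\to 0$ in probability and is asymptotically negligible.

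The asymptotic independence of the two contributions then follows cleanly from a characteristic-function factorisation. Setting $A_n$ equal to the normalised memory term and $\psi_n:=E[\exp(it\gamma_n R_n/\sqrt{m_n})\mid\mathcal{G}]$, conditioning on $\mathcal{G}$ gives $E[\exp(it\gamma_n T_n/\sqrt{m_n})]=E[e^{itA_n}\psi_n]$; since $\psi_n\to\psi:=\exp(-\tfrac12 t^2\gamma(1-\gamma))$ a.s.\ with $\psi$ deterministic, bounded convergence reduces this to $\psi\cdot\lim_n E[e^{itA_n}]$, and $\lim_n E[e^{itA_n}]$ is supplied by the appropriate classical ERW theorem. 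In case (i) the memory term contributes variance $\{\gamma+\alpha(1-\gamma)\}^2/(1-2\alpha)$ via \eqref{eq:ERWordinarySubCLT}, giving \eqref{thm:CLT(i)}; in case (ii) only the memory term survives, via \eqref{eq:ERWordinaryCriticalCLT} with coefficient $(1+\gamma)/2$, giving \eqref{thm:CLT(ii)}. For the CLT \eqref{thm:CLT(iii)b} I would centre first, using $\gamma_n T_n-M\{\gamma_n+\alpha(1-\gamma_n)\}m_n^\alpha=\{\gamma_n+\alpha(1-\gamma_n)\}(W_{m_n}-Mm_n^\alpha)+\gamma_n R_n$, and apply the same factorisation with the memory term now governed by \eqref{eq:ERWordinarySuperCLT}.

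It remains to prove the a.s.\ and $L^2$ convergence \eqref{thm:CLT(iii)a}. The memory term converges since $W_{m_n}/m_n^\alpha\to M$ a.s.\ and in $L^2$ along $m_n\to\infty$, so the task is to show $\gamma_n R_n/m_n^\alpha\to 0$. An $L^2$ estimate gives $E[(\gamma_n R_n/m_n^\alpha)^2]\le C\,n^{1-2\alpha}\to 0$ since $\alpha>1/2$, and for the almost-sure statement a conditional Hoeffding bound yields $P(|\gamma_n R_n/m_n^\alpha|>\varepsilon\mid\mathcal{G})\le 2\exp(-c\varepsilon^2 n^{2\alpha-1})$, which is summable, so Borel--Cantelli closes the argument. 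The main obstacle throughout is the asymptotic independence of the memory fluctuation and the fresh-increment fluctuation; this is precisely where Remark~\ref{rem:Remark1.1} warns that the linear setting behaves differently, and the device that makes it work here is that the conditional CLT for $R_n$ produces a deterministic Gaussian characteristic function, so conditioning on $\mathcal{G}$ decouples the two limits without any further joint CLT.
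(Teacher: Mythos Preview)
Your proposal is correct and follows essentially the same route as the paper: the paper conditions on $\mathcal{F}_\infty=\sigma(X_k:k\ge 1)$ (your $\mathcal{G}$), writes $\gamma_n T_n=c_n W_{m_n}+\gamma_n B_n$ with $B_n$ your $R_n$, proves the conditional characteristic function of $\gamma_n B_n/\sqrt{m_n}$ converges a.s.\ to the deterministic Gaussian $\exp(-\tfrac12 t^2\gamma(1-\gamma))$, and then combines this with the classical ERW limit theorems via bounded convergence exactly as you describe. The only cosmetic differences are that the paper computes the conditional characteristic function directly from the i.i.d.\ structure (raising a one-step expansion to the power $n-m_n$) rather than invoking Lindeberg--Feller, and for the a.s.\ part of \eqref{thm:CLT(iii)a} it packages the concentration step as a separate lemma using Azuma's inequality, which is the same as your conditional Hoeffding bound.
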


\begin{remark} 
If $\alpha=\gamma=0$ then the right hand side of \eqref{thm:CLT(i)} is $N(0,0)$, which we interpret as the delta measure at $0$. 

Our result \eqref{thm:CLT(iii)b} differs from the Theorem 2 (3) of Aguech and El Machkouri \cite{AguechElMachkouri24}.
The reason for this is given in Remark \ref{rem:Remark1.1}.
\end{remark}

The next theorem is concerning the strong law of large numbers and its refinement.

\begin{theorem} \label{thm:sLLN}  Let $p \in (0,1)$ and $\alpha=2p-1$. Assume that $\{m_n : n \in \mathbb{N}\}$ satisfies \eqref{cond:m_nGen}, \eqref{cond:m_nIncreasing} and \eqref{cond:m_nStability}. Then we have
\begin{align}
 \lim_{n \to \infty}  \dfrac{T_n}{n} = 0  \quad \mbox{a.s.}
\label{statement:sLLN}
\end{align}
Actually we obtain the following sharper result: If $c \in (\max\{\alpha,1/2\},1)$ then
\begin{align}
\lim_{n \to \infty} 
\dfrac{\gamma_n T_n}{(m_n)^c}
= 0 \quad \mbox{a.s.} 
\label{statement:sLLNsharpened}
\end{align}
\end{theorem}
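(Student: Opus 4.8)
The plan is to split $T_n$ into its conditional expectation given the first $m_n$ steps of the underlying ERW and a conditionally centered fluctuation, and to treat the two pieces by completely different means. Writing $\mathcal{F}_{m_n} := \sigma(X_1,\dots,X_{m_n})$ and $R_n := \sum_{k=m_n+1}^{n} X^{(n)}_k$, so that $T_n = W_{m_n} + R_n$, I would first note that each $U_{k,n}$ is independent of $\mathcal{F}_{m_n}$, whence $E[X^{(n)}_k \mid \mathcal{F}_{m_n}] = \alpha\, W_{m_n}/m_n$ and therefore
\[
\gamma_n\, E[T_n \mid \mathcal{F}_{m_n}] = W_{m_n}\bigl\{\gamma_n + \alpha(1-\gamma_n)\bigr\}.
\]
The structural point that makes the triangular-array setting tractable, despite $\{T_n\}$ having no single adapted representation, is that conditionally on $\mathcal{F}_{m_n}$ the increments $\{X^{(n)}_k : m_n < k \leq n\}$ are i.i.d.\ and bounded, each being an independent $\pm1$ multiple of $X_{U_{k,n}}$ driven by fresh, mutually independent randomness.

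Setting $\bar{R}_n := R_n - E[R_n \mid \mathcal{F}_{m_n}]$, I would record the decomposition
\[
\frac{\gamma_n T_n}{(m_n)^c} = \frac{W_{m_n}\{\gamma_n + \alpha(1-\gamma_n)\}}{(m_n)^c} + \frac{\gamma_n \bar{R}_n}{(m_n)^c}.
\]
For the mean term the prefactor $\gamma_n + \alpha(1-\gamma_n) = \alpha + \gamma_n(1-\alpha)$ stays bounded, so it suffices that $W_{m_n}/(m_n)^c \to 0$ a.s.; this is exactly what the a.s.\ growth rates of the standard ERW provide for $c > \max\{\alpha,1/2\}$, the laws of the iterated logarithm \eqref{eq:ERWordinarySubLIL} and \eqref{eq:ERWordinaryCriticalLIL} covering $\alpha \leq 1/2$ and the a.s.\ limit \eqref{eq:ERWordinarySuperLimit} covering $\alpha \in (1/2,1)$, with $m_n \to \infty$ transferring each statement to the deterministic index $m_n$.

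For the fluctuation term I would exploit the conditional i.i.d.\ structure to bound moments: by a Rosenthal/Marcinkiewicz--Zygmund estimate (or a direct combinatorial expansion) for sums of centered bounded i.i.d.\ summands, $E[\bar{R}_n^{2r} \mid \mathcal{F}_{m_n}] \leq C_r (n-m_n)^r$, hence $E[\bar{R}_n^{2r}] \leq C_r n^r$ for every integer $r \geq 1$. Since $\gamma_n \bar{R}_n/(m_n)^c = (m_n)^{1-c}\bar{R}_n/n$ and $m_n \leq n$ with $1-c>0$, this gives
\[
E\!\left[\left(\frac{\gamma_n \bar{R}_n}{(m_n)^c}\right)^{2r}\right] \leq C_r\, n^{r(1-2c)}.
\]
Because $c > 1/2$, choosing an integer $r > 1/(2c-1)$ renders the right-hand side summable in $n$, so Markov's inequality and the Borel--Cantelli lemma yield $\gamma_n \bar{R}_n/(m_n)^c \to 0$ a.s. Combining the two pieces proves \eqref{statement:sLLNsharpened}, and \eqref{statement:sLLN} follows immediately since $T_n/n = (m_n)^{c-1}\cdot \gamma_n T_n/(m_n)^c \to 0$ as $c<1$ and $m_n \to \infty$ (alternatively \eqref{statement:sLLN} follows directly, with $r=2$ already sufficient, from $W_{m_n}/m_n \to 0$ a.s.\ via \eqref{eq:ERWordinarySLLN}).

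The main obstacle I anticipate is precisely the absence of any martingale or monotone relationship linking $T_n$ and $T_{n+1}$, which are built from different rows with independent collections $\mathcal{U}_n$; this rules out the usual a.s.-convergence machinery applied to $\{T_n\}$ itself. The device that circumvents it is to carry out all the a.s.\ work over the full index $n$ through summable tail bounds rather than across consecutive $n$: the mean part is pushed onto the genuinely adapted process $W_{m_n}$, where the ERW law of the iterated logarithm is available, and the fluctuation part is controlled by arbitrarily high conditional moments. The only quantitative delicacy is that the moment order $r$ must be taken large as $c$ approaches $1/2$, but $c>1/2$ always leaves room to choose it.
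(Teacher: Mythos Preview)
Your argument is correct and follows the same overall strategy as the paper: decompose $\gamma_n T_n$ into $c_n W_{m_n}$ plus a conditionally centered fluctuation, control the first piece via the known a.s.\ growth rates \eqref{eq:ERWordinarySubLIL}, \eqref{eq:ERWordinaryCriticalLIL}, \eqref{eq:ERWordinarySuperLimit} of the ordinary ERW, and kill the second piece by a summable tail bound plus Borel--Cantelli. The only genuine difference is the concentration tool for the fluctuation. The paper applies Azuma's inequality directly to the bounded, conditionally i.i.d.\ summands to obtain the sub-Gaussian tail $P(|\gamma_n B_n|\geq x)\leq 2\exp\{-x^2/(2\gamma_n(1-\gamma_n)m_n)\}$, which yields \eqref{eq:ProofTheoremSLLNBn} and hence \eqref{eq:ProofTheoremSLLNBnCruder} with a single estimate valid for all $c\in(1/2,1)$ at once. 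Your Rosenthal/Marcinkiewicz--Zygmund moment bound $E[\bar R_n^{2r}]\leq C_r n^r$ followed by Markov achieves the same summability, at the price of having to let the moment order $r$ grow as $c\downarrow 1/2$; you identified this delicacy yourself and handled it correctly. Either route is perfectly adequate here---Azuma is slightly cleaner and gives the sharper quantitative statement \eqref{eq:ProofTheoremSLLNBn}, while your moment approach is more elementary and avoids invoking an exponential inequality. Conditioning on $\mathcal F_{m_n}$ rather than $\mathcal F_\infty$ is harmless, since the extra randomness in $\mathcal F_\infty\setminus\mathcal F_{m_n}$ is independent of $\{X^{(n)}_k:m_n<k\leq n\}$.
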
 

\section{Proof of Theorem \ref{thm:CLT}}
\label{sec:ProofTheoremCLT}

Throughout this section we assume that \eqref{cond:m_nGen}, \eqref{cond:m_nIncreasing} and \eqref{cond:m_nStability} hold.

Let $\mathcal{F}_n$ be the $\sigma$-algebra generated by $X_1,\ldots,X_n$. 
For $n \in \mathbb{N}$, the conditional distribution of $X_{n+1}$ given the history up to time $n$ is
\begin{align}
 &P(X_{n+1}= \pm 1 \mid \mathcal{F}_n) \notag \\
 &= \dfrac{\#\{k=1,\ldots,n : X_k=\pm 1\}}{n} \cdot p+  \dfrac{\#\{k=1,\ldots,n : X_k= \mp 1\}}{n} \cdot (1-p) \notag \\
&=
\dfrac{1}{2} \left( 1 \pm \alpha \cdot \dfrac{W_n}{n} \right).
\label{eq:elephantRWCondDistp}
\end{align}

For each $n \in \mathbb{N}$, let
\begin{align*}
 \mathcal{G}_{m_n}^{(n)} = \mathcal{F}_{\infty} &:= \sigma(\{X_i : i \in \mathbb{N} \}) 
=\sigma(\{X_1\} \cup \{ U_i : i \in \mathbb{N} \}) 
\end{align*}
and
\begin{align*}
\mathcal{G}_k^{(n)} &:= \sigma(\{X_i : i \in \mathbb{N} \} \cup \{ X_i^{(n)} : m_n<i \leq k\}) \\
&=\sigma(\{X_1\} \cup \{ U_i : i \in \mathbb{N} \} \cup \{ U_{i,n} : m_n<i \leq k\})
\end{align*}
for $k \in (m_n,n] \cap \mathbb{N}$.
From \eqref{def:ERWincrLatersteps}, we can see that the conditional distribution of $X_k^{(n)}$ for $k \in (m_n,n] \cap \mathbb{N}$ is given by
\begin{align}
 P (X_k^{(n)} = \pm 1 \mid \mathcal{G}_{k-1}^{(n)}) 
 &= \dfrac{1}{2} \left( 1 \pm \alpha \cdot \dfrac{W_{m_n}}{m_n} \right). \label{eq:elephantRWIncrMemoryCondDistp}
\end{align}
(This corresponds to Eq. (2.2) in \cite{GutStadtmuller22SPL}.)
From \eqref{eq:elephantRWIncrMemoryCondDistp} we have that
\begin{align} \label{eq:E[X_k^n]}
 E[ X_k^{(n)} \mid \mathcal{F}_{\infty}]  
 &= \alpha \cdot \dfrac{W_{m_n}}{m_n}
\end{align}
for each $k \in (m_n,n] \cap \mathbb{N}$, and
\begin{align} \label{eq:CondE[center]}
E[T_n-W_{m_n} \mid \mathcal{F}_{\infty}] = \sum_{k=m_n+1}^{n} E[ X_k^{(n)} \mid \mathcal{F}_{\infty}] = \alpha(n-m_n) \cdot  \dfrac{W_{m_n}}{m_n}.
\end{align}



We introduce
\begin{align}
A_n := E[T_n \mid \mathcal{F}_{\infty}] 
\quad \mbox{and} \quad
B_n := T_n-A_n. 
\label{eq:AnBnDecomp}
\end{align}
Noting that
\begin{align} 
A_n =W_{m_n} + E[T_n-W_{m_n} \mid \mathcal{F}_{\infty}] = \dfrac{W_{m_n}}{\gamma_n} \cdot \{ \gamma_n + \alpha(1-\gamma_n) \}, 
\label{eq:AnKey}
\end{align}
we have
\begin{align}
\gamma_n T_n = \gamma_n (A_n + B_n) = c_n W_{m_n}+\gamma_n B_n,
\label{eq:GammanTnDecompose2}
\end{align}
where
\begin{align}
c_n=c_n(\alpha):=\gamma_n+\alpha(1-\gamma_n).
\label{eq:cnDefinition}
\end{align}

First we prove Theorem \ref{thm:CLT} (i). Assume that $\alpha \in (-1,1/2)$. By \eqref{eq:AnKey} and \eqref{eq:ERWordinarySubCLT}, we have that
\begin{align*}
\dfrac{\gamma_n A_n}{\sqrt{m_n}} 
= \dfrac{c_n W_{m_n}}{\sqrt{m_n}}
&\stackrel{\text{d}}{\to}  \{  \gamma + \alpha (1-\gamma)\}  \cdot N\left(0,\dfrac{1}{1-2\alpha}\right)\quad \mbox{as $n \to \infty$.}
\end{align*}
In terms of characteristic functions, this is equivalent to for $\xi \in \mathbb{R}$, 
\begin{align}
E\left[ \exp\left( \dfrac{i\xi \gamma_n A_n}{\sqrt{m_n}}\right) \right] \to \exp\left( -\dfrac{\xi^2}{2} \cdot \dfrac{\{\gamma+\alpha(1-\gamma)\}^2}{1-2\alpha} \right) \quad \mbox{as $n \to \infty$.}
\label{eq:ProofTheoremCLTAn}
\end{align}
Now we turn to $\{B_n\}$. Unless specified otherwise all the results  on $\{ B_n \}$ hold for all $\alpha \in (-1,1)$. Since for each $n \in \mathbb{N}$, $X_k^{(n)}$ for $k \in (m_n,n] \cap \mathbb{N}$ are independent, identically distributed under $P(\,\cdot\,\mid \mathcal{F}_{\infty})$, so
\begin{align}
 B_n=\sum_{k=m_n+1}^n\{ X_k^{(n)}-E[X_k^{(n)}\mid \mathcal{F}_{\infty}]\}
\label{def:Bn}
\end{align}
is a sum of centered i.i.d. random variables.
The conditional variance of $X_k^{(n)}$ for
\begin{align}
V[ X_k^{(n)} \mid \mathcal{F}_{\infty}] &= E[ (X_k^{(n)})^2 \mid \mathcal{F}_{\infty}] - (E[ X_k^{(n)} \mid \mathcal{F}_{\infty}])^2 \notag \\
&=\begin{cases}
0 &\mbox{for $k \in [1,m_n] \cap \mathbb{N}$} \\
1 - \alpha^2 \cdot \left(\dfrac{W_{m_n}}{m_n}\right)^2 &\mbox{for $k \in (m_n,n] \cap \mathbb{N}$}.
\end{cases}
\label{eq:CondVarXkn} 
\end{align}
We have
\begin{align}
V\left[ \left. \dfrac{\gamma_n  B_n}{\sqrt{m_n}} \,\right| \,\mathcal{F}_{\infty} \right] 
&= \dfrac{(\gamma_n)^2}{m_n}\cdot (n-m_n)\cdot \left\{1 - \alpha^2 \cdot \left(\dfrac{W_{m_n}}{m_n}\right)^2\right\}  \notag \\
&= \gamma_n (1-\gamma_n) \cdot \left\{1 - \alpha^2 \cdot \left(\dfrac{W_{m_n}}{m_n}\right)^2\right\}, \label{eq:CondVarBnNormalized}  
\end{align}
which converges to $\gamma(1-\gamma)$  as $n \to \infty$ a.s. by \eqref{eq:ERWordinarySLLN}.
Based on this observation, we prove the following.

\begin{lemma} \label{lem:ProofTheoremCLTBn} For 
$\gamma \in [0,1]$, 
\begin{align}
E\left[ \left. \exp\left( \dfrac{i\xi \gamma_n B_n}{\sqrt{m_n}}\right) \,\right|\, \mathcal{F}_{\infty} \right] \to \exp\left(-\dfrac{\xi^2}{2} \cdot \gamma(1-\gamma) \right)\quad \mbox{as $n \to \infty$ a.s.}
\label{eq:ProofTheoremCLTBn}
\end{align}
\end{lemma}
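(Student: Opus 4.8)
The plan is to exploit the fact, recorded just before \eqref{def:Bn}, that conditionally on $\mathcal{F}_{\infty}$ the increments making up $B_n$ are independent and identically distributed. Write $N_n := n-m_n$ and let $\widetilde{X}_k := X_k^{(n)} - E[X_k^{(n)} \mid \mathcal{F}_{\infty}]$ for $m_n < k \le n$; these are centred, $\mathcal{F}_{\infty}$-conditionally i.i.d., and bounded by $2$ in absolute value. Hence the conditional transform factorises, and for every $\xi \in \mathbb{R}$,
\begin{align*}
E\left[ \left. \exp\left( \dfrac{i\xi \gamma_n B_n}{\sqrt{m_n}}\right) \,\right|\, \mathcal{F}_{\infty} \right] = \phi_n(\xi)^{N_n}, \qquad \phi_n(\xi) := E\left[ \left. \exp\left( \dfrac{i\xi \gamma_n \widetilde{X}_{m_n+1}}{\sqrt{m_n}}\right) \,\right|\, \mathcal{F}_{\infty} \right].
\end{align*}
So the whole matter reduces to a second-order analysis of the single-term transform $\phi_n(\xi)$ followed by taking the $N_n$-th power.

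Since $E[\widetilde{X}_{m_n+1} \mid \mathcal{F}_{\infty}] = 0$ and, by \eqref{eq:CondVarXkn}, $E[\widetilde{X}_{m_n+1}^2 \mid \mathcal{F}_{\infty}] = \sigma_n^2 := 1 - \alpha^2 (W_{m_n}/m_n)^2$, the elementary bound $|e^{ix}-(1+ix-x^2/2)| \le |x|^3/6$ together with the uniform estimate $|\widetilde{X}_{m_n+1}| \le 2$ (so that the third absolute conditional moment is at most $8$) yields
\begin{align*}
\phi_n(\xi) = 1 - \dfrac{\xi^2}{2}\,\dfrac{\gamma_n^2}{m_n}\,\sigma_n^2 + \varepsilon_n, \qquad |\varepsilon_n| \le C(\xi)\left(\dfrac{\gamma_n}{\sqrt{m_n}}\right)^3,
\end{align*}
where $C(\xi)$ depends only on $\xi$.

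Because $\gamma_n/\sqrt{m_n} \to 0$, for large $n$ I may write $\phi_n(\xi) = 1+z_n$ with $z_n \to 0$, take the principal logarithm, and use $\log(1+z_n) = z_n + O(z_n^2)$. The key algebraic identity $N_n \cdot \gamma_n^2/m_n = (n-m_n)\,m_n/n^2 = \gamma_n(1-\gamma_n)$ then gives
\begin{align*}
N_n z_n = -\dfrac{\xi^2}{2}\,\gamma_n(1-\gamma_n)\,\sigma_n^2 + N_n \varepsilon_n.
\end{align*}
By the strong law \eqref{eq:ERWordinarySLLN} for the standard ERW, $W_{m_n}/m_n \to 0$ a.s., so $\sigma_n^2 \to 1$ a.s., and with $\gamma_n \to \gamma$ the leading term tends a.s. to $-\tfrac{\xi^2}{2}\gamma(1-\gamma)$. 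For the remainders I would use the single uniform estimate $N_n (\gamma_n/\sqrt{m_n})^3 = (n-m_n)\,m_n^{3/2}/n^3 \le n^{-1/2} \to 0$, which also dominates $N_n z_n^2$; this forces both $N_n\varepsilon_n \to 0$ and $N_n\,O(z_n^2) \to 0$. Exponentiating $N_n \log(1+z_n)$ then delivers \eqref{eq:ProofTheoremCLTBn}.

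The computation itself is routine; the delicate point is that \eqref{eq:ProofTheoremCLTBn} is an \emph{almost-sure} statement about the random conditional law of $B_n$, so each estimate must hold on the single almost-sure event where $W_{m_n}/m_n \to 0$ and $\gamma_n \to \gamma$. The main obstacle I anticipate is the degenerate regime $\gamma = 0$ (and, relatedly, rows with $m_n = n$, where $B_n$ is an empty sum): there $\sigma_n^2 \to 1$ but $\gamma_n(1-\gamma_n) \to 0$, so one must check that the error terms still vanish with no lower bound on $m_n/n$ available. This is precisely why I would phrase the remainder bound as $(n-m_n)\,m_n^{3/2}/n^3 \le n^{-1/2}$, using only $m_n \le n$ and $n-m_n \le n$, so that one estimate covers $\gamma \in [0,1]$ uniformly. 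A softer alternative is to observe that, on that almost-sure event, the array $\{\gamma_n \widetilde{X}_k/\sqrt{m_n}\}$ consists of uniformly infinitesimal bounded summands whose conditional variances sum to $\gamma(1-\gamma)$, so the conditional Lindeberg--Feller theorem applies directly; but the explicit expansion above is cleaner and makes the role of \eqref{eq:ERWordinarySLLN} transparent.
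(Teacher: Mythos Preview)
Your proof is correct and follows the same route as the paper: factor the conditional characteristic function via the $\mathcal{F}_{\infty}$-conditional i.i.d.\ structure, expand a single factor to second order (the paper writes the same thing as $1 - \tfrac{\xi^2 \gamma_n}{2n}\sigma_n^2 + o(\gamma_n/n)$, using $\gamma_n^2/m_n=\gamma_n/n$), take the $(n-m_n)$-th power, and conclude via $(n-m_n)\gamma_n/n=\gamma_n(1-\gamma_n)\to\gamma(1-\gamma)$ together with \eqref{eq:ERWordinarySLLN}. Your explicit third-moment remainder bound $N_n(\gamma_n/\sqrt{m_n})^3\le n^{-1/2}$ and the discussion of the degenerate cases $\gamma=0$ and $m_n=n$ add useful rigour beyond the paper's terse $o(\gamma_n/n)$, but the argument is otherwise the same.
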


\begin{proof} Because $B_n$ is the sum \eqref{def:Bn} of centered i.i.d. random variables under $P(\,\cdot\,\mid \mathcal{F}_{\infty})$, by \eqref{eq:CondVarXkn} we have
\begin{align*}
&E\left[ \left. \exp\left( \dfrac{i\xi \gamma_n B_n}{\sqrt{m_n}}\right) \,\right|\, \mathcal{F}_{\infty} \right] \\
&= \left[1 -  \dfrac{\xi^2 \gamma_n}{2n} \cdot \left\{1 - \alpha^2 \cdot \left(\dfrac{W_{m_n}}{m_n}\right)^2\right\} + o \left(\dfrac{\gamma_n}{n}\right)\right]^{n-m_n}\quad \mbox{as $n \to \infty$ a.s.}
\end{align*}
Note that 
\[ 
 \dfrac{\gamma_n}{n} \to 0
 \quad\mbox{and}\quad 
 \dfrac{\gamma_n}{n}\cdot (n-m_n) =\gamma_n(1-\gamma_n) \to \gamma(1-\gamma) \]
as $n \to \infty$. Now \eqref{eq:ProofTheoremCLTBn} follows from this  together with \eqref{eq:ERWordinarySLLN}.
\end{proof}
From \eqref{eq:GammanTnDecompose2}, \eqref{eq:ProofTheoremCLTAn} and \eqref{eq:ProofTheoremCLTBn} together with the bounded convergence theorem, we can see that
\begin{align*}
E\left[ \exp\left( \dfrac{i\xi \gamma_n T_n}{\sqrt{m_n}}\right)  \right] 
= E\left[ \exp\left( \dfrac{i\xi \gamma_n A_n}{\sqrt{m_n}}\right) \cdot E\left[ \left. \exp\left( \dfrac{i\xi \gamma_n B_n}{\sqrt{m_n}}\right) \,\right|\, \mathcal{F}_{\infty} \right] \right] 
\end{align*}
converges to
\begin{align*}
\exp\left( -\dfrac{\xi^2}{2} \cdot \dfrac{\{\gamma+\alpha(1-\gamma)\}^2}{1-2\alpha} \right)\cdot \exp\left(-\dfrac{\xi^2}{2} \cdot \gamma(1-\gamma) \right) 
\end{align*}
as $n \to \infty$. This gives \eqref{thm:CLT(i)}.

The proof of Theorem \ref{thm:CLT} (ii) is along the same lines as that of (i), and is actually simpler. Assume that $\alpha =1/2$. As $c_n(1/2)=(1+\gamma_n)/2$, from \eqref{eq:AnKey} and \eqref{eq:ERWordinaryCriticalCLT}, we have that
\begin{align*}
\dfrac{\gamma_n A_n}{\sqrt{m_n\log m_n}} &= \dfrac{c_n W_{m_n}}{\sqrt{m_n \log m_n}}  
\stackrel{\text{d}}{\to}  \dfrac{1+\gamma}{2} \cdot N\left(0,1\right)\quad \mbox{as $n \to \infty$.}
\end{align*}
Also, from \eqref{eq:CondVarBnNormalized} and \eqref{eq:ERWordinarySLLN}, we have that
\begin{align}
E\left[ \left(\dfrac{\gamma_n B_n}{\sqrt{m_n}}\right)^2 \right] &= \gamma_n (1-\gamma_n) \left\{1 - \alpha^2 \cdot E\left[\left(\dfrac{W_{m_n}}{m_n}\right)^2\right] \right\} \notag \\ 
&\to \gamma(1-\gamma) \quad  \mbox{as $n \to \infty$.} \label{asymp:E[Bn]Normalized}
\end{align}
This implies that $\gamma_n B_n/\sqrt{m_n \log m_n} \to 0$ as $n \to \infty$ in $L^2$. 
By Slutsky's lemma,  we obtain \eqref{thm:CLT(ii)}.

Finally we prove Theorem \ref{thm:CLT} (iii). Assume that $\alpha \in (1/2,1)$. By \eqref{eq:AnKey} and \eqref{eq:ERWordinarySuperLimit},
\begin{align}
\dfrac{\gamma_n A_n}{(m_n)^{\alpha}} &=  \dfrac{c_n W_{m_n}}{(m_n)^{\alpha}} 
\to  \{\gamma+\alpha(1-\gamma)\}M
\label{eq:AnSuperdiffusiveASL2}
\end{align}
as $n \to \infty$ a.s. and in $L^2$. Noting that 
$\gamma_n B_n/(m_n)^{\alpha} \to 0$ as $n \to \infty$ in $L^2$, 
by \eqref{asymp:E[Bn]Normalized}, we obtain the $L^2$-convergence in
\eqref{thm:CLT(iii)a}.
From \eqref{eq:ProofTheoremSLLNBnCruder} which will be proved in the next section, the almost-sure convergence in \eqref{thm:CLT(iii)a} follows. 
To prove \eqref{thm:CLT(iii)b}, by \eqref{eq:GammanTnDecompose2}, we have
\begin{align}
\gamma_n T_n-c_n \cdot M \cdot (m_n)^{\alpha} = c_n \{W_{m_n}-M \cdot (m_n)^{\alpha}\} + \gamma_n B_n.
\label{eq:GammanTnDecompose}
\end{align}
Note that $M$ is $\mathcal{F}_{\infty}$-measurable. Using \eqref{eq:ERWordinarySuperCLT}, \eqref{eq:ProofTheoremCLTBn} and \eqref{eq:GammanTnDecompose}, we obtain \eqref{thm:CLT(iii)b} similarly as in the proof of \eqref{thm:CLT(i)}.
\qed

\section{Proof of Theorem \ref{thm:sLLN}}
\label{sec:ProofTheoremsLLN}

In this section we assume that \eqref{cond:m_nGen}, \eqref{cond:m_nIncreasing} and \eqref{cond:m_nStability} hold.

First we give almost-sure bounds for $\{B_n\}$.

\begin{lemma} \label{lem:ProofTheoremSLLNBn} For any $\alpha \in (-1,1)$ and $\gamma \in [0,1]$,
\begin{align}
\limsup_{n \to \infty} \dfrac{\gamma_n B_n}{\sqrt{2\gamma_n(1-\gamma_n)m_n \log n}} \leq 1 \quad \mbox{a.s.}
\label{eq:ProofTheoremSLLNBn}
\end{align}
In particular, for any $c \in (1/2,1)$,
\begin{align}
\lim_{n \to \infty} \dfrac{\gamma_n B_n}{(m_n)^c} = 0 \quad \mbox{a.s.}
\label{eq:ProofTheoremSLLNBnCruder}
\end{align}
\end{lemma}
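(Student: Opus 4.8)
The plan is to exploit the conditional i.i.d.\ structure of $B_n$ under $P(\,\cdot\mid\mathcal{F}_{\infty})$ recorded in \eqref{def:Bn}, and to replace the \emph{random} conditional variance of \eqref{eq:CondVarXkn} by a \emph{deterministic} sub-Gaussian proxy, so that a Borel--Cantelli argument applies with no bookkeeping of fluctuations. First I would note that each summand $\xi_k^{(n)} := X_k^{(n)}-E[X_k^{(n)}\mid\mathcal{F}_{\infty}]$ in \eqref{def:Bn} is centered under $P(\,\cdot\mid\mathcal{F}_{\infty})$ and takes exactly two values differing by $2$, so Hoeffding's lemma gives $E[\exp(\lambda\xi_k^{(n)})\mid\mathcal{F}_{\infty}]\le\exp(\lambda^2/2)$ for all $\lambda\in\mathbb{R}$. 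Since the $\xi_k^{(n)}$ for $m_n<k\le n$ are conditionally independent, multiplying over the $n-m_n$ terms and using $\gamma_n^2(n-m_n)=\gamma_n(1-\gamma_n)m_n$ yields
\[
E\left[\exp(\lambda\gamma_n B_n)\mid\mathcal{F}_{\infty}\right]\le\exp\left(\frac{\lambda^2}{2}\,\gamma_n(1-\gamma_n)m_n\right).
\]
The point is that this bound is independent of the realization of $\mathcal{F}_{\infty}$, so taking expectations and optimizing over $\lambda$ (the Chernoff bound) produces the unconditional estimate $P(\pm\gamma_n B_n>x)\le\exp\bigl(-x^2/(2\gamma_n(1-\gamma_n)m_n)\bigr)$ for $x>0$.

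Next I would run Borel--Cantelli. Fixing $\epsilon>0$ and putting $x_n=(1+\epsilon)\sqrt{2\gamma_n(1-\gamma_n)m_n\log n}$, the tail bound becomes $n^{-(1+\epsilon)^2}$, which is summable since $(1+\epsilon)^2>1$. Applying Borel--Cantelli to the events $\{\pm\gamma_n B_n>x_n\}$ shows that almost surely, eventually $|\gamma_n B_n|\le(1+\epsilon)\sqrt{2\gamma_n(1-\gamma_n)m_n\log n}$; letting $\epsilon\downarrow 0$ along a sequence gives \eqref{eq:ProofTheoremSLLNBn}. The conditional variance computation \eqref{eq:CondVarBnNormalized}, which tends to $\gamma(1-\gamma)$ by \eqref{eq:ERWordinarySLLN}, is what identifies the correct normalization, while the clean constant $1$ comes from the Hoeffding proxy.

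To deduce \eqref{eq:ProofTheoremSLLNBnCruder}, it suffices to combine \eqref{eq:ProofTheoremSLLNBn} with an elementary size comparison. From \eqref{eq:ProofTheoremSLLNBn} there is a constant $C$ with $|\gamma_n B_n|\le C\sqrt{\gamma_n(1-\gamma_n)m_n\log n}$ eventually a.s.; using $\gamma_n(1-\gamma_n)\le\gamma_n=m_n/n$ and then $m_n\le n$ from \eqref{cond:m_nGen},
\[
\frac{|\gamma_n B_n|}{(m_n)^c}\le C\,\frac{\sqrt{(m_n^2/n)\log n}}{(m_n)^c}=C\,(m_n)^{1-c}\sqrt{\frac{\log n}{n}}\le C\,n^{1/2-c}\sqrt{\log n}\to 0
\]
whenever $c>1/2$, which is exactly \eqref{eq:ProofTheoremSLLNBnCruder}.

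The only real subtlety I anticipate is that a naive treatment would carry the random factor $1-\alpha^2(W_{m_n}/m_n)^2$ of \eqref{eq:CondVarXkn} through the exponential bound, forcing one to control its a.s.\ fluctuations before invoking Borel--Cantelli; discarding it at the moment-generating-function stage in favour of the deterministic Hoeffding proxy $1$ removes this difficulty and simultaneously delivers the sharp constant $1$. I would double-check the slow-memory regime $\gamma=0$, where $m_n$ may grow far slower than $n$ and the denominator $\gamma_n(1-\gamma_n)m_n$ degenerates, but the uniform bound $m_n\le n$ used in the last display handles it cleanly; the edge case $m_n=n$ is trivial since then $B_n=0$.
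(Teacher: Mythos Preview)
Your argument is correct and follows essentially the same route as the paper's own proof: a sub-Gaussian moment-generating-function bound (you cite Hoeffding's lemma, the paper cites Azuma's inequality, but in this bounded conditional-i.i.d.\ setting they coincide), followed by the Chernoff tail bound, Borel--Cantelli, and finally the same elementary size comparison using $m_n\le n$ to pass from \eqref{eq:ProofTheoremSLLNBn} to \eqref{eq:ProofTheoremSLLNBnCruder}. Your phrasing via ``two values differing by $2$'' is in fact slightly cleaner than the paper's stated bound $|X_k^{(n)}-E[X_k^{(n)}\mid\mathcal{F}_\infty]|\le 1$, which need not hold literally, though both lead to the same exponent $\lambda^2/2$.
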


\begin{proof} Note that $|X_k^{(n)}-E[X_k^{(n)}\mid \mathcal{F}_{\infty}]| \leq 1$ for each $1 \leq k \leq n$. For $\lambda \in \mathbb{R}$, it follows from Azuma's inequality \cite{Azuma67} that
\begin{align*}
&E[ \exp(\lambda \gamma_n B_n) \mid \mathcal{F}_{\infty} ] \notag \\&=E\left[\left. \exp\left(\lambda \gamma_n \sum_{k=m_n+1}^n\{ X_k^{(n)}-E[X_k^{(n)}\mid \mathcal{F}_{\infty}]\}\right)\,\right|\, \mathcal{F}_{\infty} \right] \notag \\
&\leq \exp((\lambda \gamma_n)^2 (n-m_n)/2),
\end{align*}
and
\begin{align*}
P(|\gamma_n B_n| \geq x ) \leq 2\exp\left(-\dfrac{x^2}{2\gamma_n (1-\gamma_n)m_n}\right) \quad \mbox{for $x > 0$}.
\end{align*}
Taking $x=\sqrt{2(1+\varepsilon) \gamma_n (1-\gamma_n)m_n \log n}$ for some $\varepsilon>0$, we have
\begin{align*}
\sum_{n=1}^{\infty} P(|\gamma_n B_n| \geq \sqrt{2(1+\varepsilon) \gamma_n (1-\gamma_n)m_n \log n}) \leq \sum_{n=1}^{\infty} \dfrac{2}{n^{1+\varepsilon}} .
\end{align*}
This together with the Borel--Cantelli lemma implies \eqref{eq:ProofTheoremSLLNBn}. To obtain \eqref{eq:ProofTheoremSLLNBnCruder}, note that for $c \in (1/2,1)$,
\begin{align*}
\dfrac{2\gamma_n(1-\gamma_n)m_n \log n}{(m_n)^{2c}} &= \dfrac{2(1-\gamma_n) \log n}{n(m_n)^{2c-2}} 
\leq \dfrac{2(1-\gamma_n) \log n}{n^{2c-1}} \to 0 \quad \mbox{as $n \to \infty$,}
\end{align*}
where we used $2c-2<0<2c-1$ and $m_n \leq n$. 
\end{proof}


We prove \eqref{statement:sLLNsharpened} in Theorem \ref{thm:sLLN}. 
Eq. \eqref{statement:sLLN} is readily derived from \eqref{statement:sLLNsharpened}.
For the case $\alpha \in (-1,1/2)$, 
from \eqref{eq:ERWordinarySubLIL} and \eqref{eq:AnKey} we have that
\begin{align*}
\limsup_{n \to \infty} \dfrac{\gamma_n A_n}{\sqrt{2m_n \log \log m_n}} \leq \dfrac{\gamma + \alpha(1-\gamma)}{\sqrt{1-2\alpha}}\quad \mbox{a.s.} %
\end{align*}
For the case $\alpha=1/2$,
from \eqref{eq:ERWordinaryCriticalLIL} and \eqref{eq:AnKey} we have that
\begin{align*}
\limsup_{n \to \infty} \dfrac{\gamma_n A_n}{\sqrt{2m_n\log m_n \log \log \log m_n}} \leq \dfrac{1+\gamma}{2}
 \quad \mbox{a.s.} 
\end{align*}
By \eqref{eq:ProofTheoremSLLNBnCruder}, if $\alpha \in (-1,1/2]$ then \eqref{statement:sLLNsharpened} holds for any $c \in (1/2,1)$.
As for the case $\alpha \in (1/2,1)$, almost sure convergence in \eqref{thm:CLT(iii)a} follows from \eqref{eq:AnSuperdiffusiveASL2} and \eqref{eq:ProofTheoremSLLNBnCruder}. Thus \eqref{statement:sLLNsharpened} holds for any $c \in (\alpha,1)$.
\qed
%

\section{The ERW with stops in the triangular array setting}
\label{sec:ERWStop}

Let $s \in [0,1]$, and assume that $p,q,r \in [0,1)$ satisfies $p+q+r=1$. In this section we consider the {\it ERW with stops} $\{W_n\}$
whose increments are given by
the sequence $X_1,X_2,\ldots$ of random variables taking values in $\{+1, -1\}$ defined by \eqref{def:ERWfirststep} and 
\begin{align}
X_{n+1} &=  \begin{cases}
X_{U_n} &\mbox{with probability $p$} \\
-X_{U_n} &\mbox{with probability $q$} \\
0 &\mbox{with probability $r$.} \\
\end{cases}
\label{def:ERWwithStopsNextsteps}
\end{align}
Note that if $r=0$ then it is the standard ERW defined in Section \ref{sec:intro}. Hereafter we assume that $r \in (0,1)$.

The ERW with stops was introduced by Kumar {\it et al.} \cite{KumarHarbolaLindenberg10PRE}. To describe the limit theorems 
obtained by Bercu \cite{Bercu22}, it is convenient to introduce the following new parameters:
\begin{align*}
\alpha:=p-q\quad \mbox{and} \quad \beta:=1-r,
\end{align*}
where $\beta \in (0,1)$ and $\alpha \in [-\beta,\beta]$.
Let $\Sigma_n$ be the number of moves up to time $n$, that is
\begin{align}
\Sigma_n := \sum_{k=1}^n 1_{\{X_k \neq 0\}} = \sum_{k=1}^n X_k^2 \quad \mbox{for $n\in \mathbb{N}$}.
\end{align}
It is shown in \cite{Bercu22} that
\begin{align}
\lim_{n \to \infty} \dfrac{\Sigma_n}{n^{\beta}} = \Sigma>0 \quad \mbox{a.s. and in $L^2$,} \label{eq:ERWSSigmanLimit}
\end{align}
where $\Sigma$ has a Mittag--Leffler distribution with parameter $\beta$. We turn to the CLT for $\{W_n \}$ in \cite{Bercu22}
\smallskip

\noindent
$\bullet$ For $\alpha \in [-\beta,\beta/2)$,
\begin{align}
\frac{W_n}{\sqrt{\Sigma_n}}\stackrel{\text{d}}{\to}N\left(0,\dfrac{\beta}{\beta-2\alpha}\right)\quad \mbox{as $n \to \infty$.}
\label{eq:Bercu22JSP(3.8)}
\end{align}
$\bullet$ For $\alpha=\beta/2$,
\begin{align}
\dfrac{W_n}{\sqrt{\Sigma_n \log \Sigma_n}}\stackrel{\text{d}}{\to}N(0,1)\quad \mbox{as $n \to \infty$.} 
\label{eq:Bercu22JSP(3.16)}
\end{align}
$\bullet$ For $\alpha \in (\beta/2,\beta]$, there exists a random variable $M$ s.t.
\begin{align}
\lim_{n \to \infty} \dfrac{W_n}{n^{\alpha}} = M \quad \mbox{a.s. and in $L^2$,} 
\label{eq:Bercu22JSP(3.18)}
\end{align}
and
\begin{align}
\dfrac{W_n-Mn^{\alpha}}{\sqrt{\Sigma_n}} \stackrel{\text{d}}{\to} N\left(0,\dfrac{\beta}{2\alpha-\beta}\right) \quad \mbox{as $n \to \infty$,} 
\label{eq:Bercu22JSP(3.26)}
\end{align}
where $P(M>0)>0$.

Next we define the sequence $\{T_n\}$ as in \eqref{eq:TnDef} however $Y_k^{(n)}$ and $X_k^{(n)}$ of \eqref{def:ERWincrsteps} and \eqref{def:ERWincrLatersteps} are defined with $\{X_i\}$ as in \eqref{def:ERWwithStopsNextsteps} instead of \eqref{def:ERWNextsteps}.
We call it the {\it ERW with stops in the triangular array setting}. 

Our first result of this section is an extension of Theorem 4.1 in \cite{GutStadtmuller22SPL}. We note here that \cite{GutStadtmuller22SPL} allows $X_1$ to take value $0$ with probability $r$ unlike in this paper. As such they have an extra $\delta_0$ in their results for the case $\gamma=0$.

\begin{theorem} \label{thm:StopCLT} Let $\beta \in (0,1)$ and $\alpha \in [-\beta,\beta]$. Assume that $\{m_n : n \in \mathbb{N}\}$ satisfies \eqref{cond:m_nGen}, \eqref{cond:m_nIncreasing} and \eqref{cond:m_nStability}. 

\noindent (i) If $\alpha \in [-\beta,\beta/2)$ then
\begin{align}
\dfrac{\gamma_n T_n}{\sqrt{\Sigma_{m_n}}} 
 \stackrel{\text{d}}{\to} N\left(0,\dfrac{\beta \{\gamma+\alpha(1-\gamma)\}^2}{\beta-2\alpha}+\beta \gamma(1-\gamma)\right) \quad \mbox{as $n \to \infty$.}
\label{thm:StopCLT(i)}
\end{align}
(ii) If $\alpha = \beta/2$ then 
\begin{align}
\dfrac{\gamma_n T_n}{\sqrt{\Sigma_{m_n} \log \Sigma_{m_n}}}
\stackrel{\text{d}}{\to}  N\left(0,\{\gamma+\beta(1-\gamma)/2\}^2 \right) \quad \mbox{as $n \to \infty$.} 
\label{thm:StopCLT(ii)}
\end{align}
(iii) If $\alpha \in (\beta/2,\beta]$ then
\begin{align}
\lim_{n \to \infty} \dfrac{\gamma_n T_n}{(m_n)^{\alpha}}
=\{\gamma+\alpha(1-\gamma)\}M
 \quad \mbox{in $L^2$,}\label{thm:StopCLT(iii)a} 
\end{align}
where $M$ is the random variable in \eqref{eq:Bercu22JSP(3.18)}.
Moreover,
\begin{align}
&\dfrac{\gamma_n T_n- M \cdot \{\gamma_n + \alpha(1-\gamma_n)\} \cdot (m_n)^{\alpha}}{\sqrt{\Sigma_{m_n}}} \notag \\
&\stackrel{\text{d}}{\to} N\left(0,\dfrac{\beta\{\gamma+\alpha(1-\gamma)\}^2}{2\alpha-\beta}+\beta\gamma(1-\gamma)\right) \quad \mbox{as $n \to \infty$.} \label{thm:StopCLT(iii)b}
\end{align}
\end{theorem}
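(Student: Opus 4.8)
The plan is to follow the proof of Theorem \ref{thm:CLT} essentially line by line, replacing the conditional increment law \eqref{eq:elephantRWIncrMemoryCondDistp} by the one appropriate to the stops model and the standard ERW limit theorems by the results \eqref{eq:ERWSSigmanLimit}--\eqref{eq:Bercu22JSP(3.26)} of Bercu. First I would record the conditional law of $X_k^{(n)}$ for $k\in(m_n,n]\cap\mathbb{N}$: writing $\#\{i\le m_n: X_i=\pm1\}=(\Sigma_{m_n}\pm W_{m_n})/2$ and using \eqref{def:ERWwithStopsNextsteps} with $U_{k,n}$ uniform on $\{1,\dots,m_n\}$, one obtains
\begin{align*}
P(X_k^{(n)}=\pm1\mid\mathcal{G}_{k-1}^{(n)})=\dfrac{\beta\Sigma_{m_n}\pm\alpha W_{m_n}}{2m_n},
\end{align*}
so that $E[X_k^{(n)}\mid\mathcal{F}_{\infty}]=\alpha W_{m_n}/m_n$ exactly as in \eqref{eq:E[X_k^n]}, while the conditional second moment is now $E[(X_k^{(n)})^2\mid\mathcal{F}_{\infty}]=\beta\Sigma_{m_n}/m_n$, whence
\begin{align*}
V[X_k^{(n)}\mid\mathcal{F}_{\infty}]=\dfrac{\beta\Sigma_{m_n}}{m_n}-\alpha^2\left(\dfrac{W_{m_n}}{m_n}\right)^2.
\end{align*}
Because the conditional mean is unchanged, the decomposition \eqref{eq:AnBnDecomp}--\eqref{eq:cnDefinition} carries over verbatim: with $A_n=E[T_n\mid\mathcal{F}_{\infty}]$ and $B_n=T_n-A_n$ we again have $\gamma_n A_n=c_n W_{m_n}$, where $c_n=\gamma_n+\alpha(1-\gamma_n)$.

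The only genuinely new computation is the conditional variance of the centered part under the correct normalization $\sqrt{\Sigma_{m_n}}$. Since conditionally on $\mathcal{F}_{\infty}$ the sum $B_n$ consists of $n-m_n$ i.i.d.\ centered terms, I would find
\begin{align*}
V\left[\left.\dfrac{\gamma_n B_n}{\sqrt{\Sigma_{m_n}}}\,\right|\,\mathcal{F}_{\infty}\right]=\beta\gamma_n(1-\gamma_n)-\gamma_n(1-\gamma_n)\cdot\dfrac{\alpha^2 W_{m_n}^2}{m_n\Sigma_{m_n}},
\end{align*}
the factor $\Sigma_{m_n}$ cancelling against the normalization in the leading term, so the conditional variance tends to $\beta\gamma(1-\gamma)$. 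A conditional central limit theorem in the spirit of Lemma \ref{lem:ProofTheoremCLTBn} then follows from a Lindeberg argument, each summand being bounded by $2\gamma_n/\sqrt{\Sigma_{m_n}}\to0$ because $\Sigma_{m_n}\to\infty$ by \eqref{eq:ERWSSigmanLimit}, yielding
\begin{align*}
E\left[\left.\exp\left(\dfrac{i\xi\gamma_n B_n}{\sqrt{\Sigma_{m_n}}}\right)\,\right|\,\mathcal{F}_{\infty}\right]\to\exp\left(-\dfrac{\xi^2}{2}\beta\gamma(1-\gamma)\right).
\end{align*}

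With these ingredients the three cases proceed exactly as in Section \ref{sec:ProofTheoremCLT}. For (i) I would use \eqref{eq:Bercu22JSP(3.8)} to get $\gamma_n A_n/\sqrt{\Sigma_{m_n}}=c_nW_{m_n}/\sqrt{\Sigma_{m_n}}\stackrel{\text{d}}{\to}N(0,c^2\beta/(\beta-2\alpha))$ with $c=\gamma+\alpha(1-\gamma)$, factor the characteristic function of $\gamma_n T_n/\sqrt{\Sigma_{m_n}}$ through $\mathcal{F}_{\infty}$, and pass to the limit by bounded convergence to obtain \eqref{thm:StopCLT(i)}. For (ii) the same step with \eqref{eq:Bercu22JSP(3.16)} gives the $A_n$-term the limit $N(0,\{\gamma+\beta(1-\gamma)/2\}^2)$, while $\gamma_n B_n/\sqrt{\Sigma_{m_n}\log\Sigma_{m_n}}\to0$ in $L^2$ since the bounded conditional second moment is divided by $\log\Sigma_{m_n}\to\infty$; Slutsky's lemma then yields \eqref{thm:StopCLT(ii)}. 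For (iii) I would first obtain \eqref{thm:StopCLT(iii)a} in $L^2$ from \eqref{eq:Bercu22JSP(3.18)} together with the bound $E[(\gamma_n B_n/m_n^{\alpha})^2]\le\beta\gamma_n(1-\gamma_n)E[\Sigma_{m_n}]/m_n^{2\alpha}\to0$, valid because $E[\Sigma_{m_n}]/m_n^{2\alpha}\sim E[\Sigma]\,m_n^{\beta-2\alpha}\to0$ when $\alpha>\beta/2$; then, writing $\gamma_n T_n-c_nM m_n^{\alpha}=c_n(W_{m_n}-Mm_n^{\alpha})+\gamma_n B_n$ and normalizing by $\sqrt{\Sigma_{m_n}}$, I would combine \eqref{eq:Bercu22JSP(3.26)} with the conditional CLT above (noting that $M$ is $\mathcal{F}_{\infty}$-measurable) to deduce \eqref{thm:StopCLT(iii)b}.

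The main obstacle is controlling the correction term $\alpha^2 W_{m_n}^2/(m_n\Sigma_{m_n})$ in the conditional variance. In the supercritical regime $W_{m_n}^2/\Sigma_{m_n}\sim(M^2/\Sigma)m_n^{2\alpha-\beta}$ actually grows, but dividing by $m_n$ leaves $m_n^{2\alpha-\beta-1}\to0$ almost surely because $\alpha\le\beta<1$; in the sub- and critical regimes $W_{m_n}^2/\Sigma_{m_n}$ is only tight (up to a logarithmic factor at criticality), so I can only conclude convergence to $0$ \emph{in probability}. This forces me to replace the almost-sure conditional-variance convergence of Lemma \ref{lem:ProofTheoremCLTBn} by a convergence-in-probability argument: the conditional characteristic function is bounded and converges in probability to a constant, which suffices to pass to the limit under the expectation (via bounded convergence, or equivalently a subsequence argument). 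This is also why only $L^2$-convergence, rather than the almost-sure convergence of \eqref{thm:CLT(iii)a}, can be asserted in \eqref{thm:StopCLT(iii)a}, since a clean law of the iterated logarithm for $W_n/\sqrt{\Sigma_n}$ in the stops model is not available from the cited results.
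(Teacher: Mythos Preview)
Your proof is correct and follows the same decomposition $T_n=A_n+B_n$ and conditional characteristic-function argument as the paper. There is one point where you are being unnecessarily cautious. The paper shows that the correction term $\alpha^2 W_{m_n}^2/(m_n\Sigma_{m_n})$ tends to zero \emph{almost surely} in all three regimes, not just in probability: it invokes the laws of the iterated logarithm for the stops model proved by Bercu (Eqs.~(3.5) and (3.13) in \cite{Bercu22}), which give $W_n=O\big(\sqrt{n^{\beta}\log\log n}\big)$ a.s.\ in the subcritical case and $W_n=O\big(\sqrt{n^{\beta}\log n\,\log\log\log n}\big)$ a.s.\ at criticality, so that in fact $W_{m_n}/(m_n)^{(1+\beta)/2}\to0$ a.s.\ for every $\alpha\in[-\beta,\beta]$. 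This restores the almost-sure convergence of the conditional variance and makes the subsequence manoeuvre unnecessary. Your convergence-in-probability route is of course still valid for the CLT conclusions.

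Your explanation of why only $L^2$-convergence is asserted in \eqref{thm:StopCLT(iii)a} is also slightly off target. The $A_n$-part does converge almost surely by \eqref{eq:Bercu22JSP(3.18)}; the obstruction is the $B_n$-part. The Azuma-based bound of Lemma~\ref{lem:ProofTheoremSLLNBn} gives $\gamma_n B_n/(m_n)^c\to0$ a.s.\ only for $c>1/2$, so one obtains a.s.\ convergence in \eqref{thm:StopCLT(iii)a} precisely when $\alpha>1/2$ (cf.\ the remark following Theorem~\ref{thm:sLLN_ERWS}), not because an LIL for $W_n/\sqrt{\Sigma_n}$ is missing.
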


\begin{remark} Unlike the results in \cite{GutStadtmuller22SPL}, we have a random normalization in the results above. This is because we consider the general case $\gamma \in [0,1]$.
We can obtain the $L^4$-convergence in \eqref{thm:StopCLT(iii)a}, using Burkholder's inequality as in Eq. (3.15) of \cite{AguechElMachkouri24}.
\end{remark}


We also consider the process $\{\Xi_n : n \in \mathbb{N} \}$ defined by
\begin{align}
\Xi_n:= \sum_{k=1}^n \{X_k^{(n)}\}^2 \quad \mbox{for $n \in \mathbb{N}$.}
\end{align}
The next theorem is an improvement of Theorem 4.2 in \cite{GutStadtmuller22SPL}.

\begin{theorem} \label{thm:LLNCLTERWSincr} Under the same condition as in Theorem \ref{thm:StopCLT}, we have that
\begin{align}
\lim_{n \to \infty} \dfrac{\gamma_n \Xi_n}{(m_n)^{\beta}} = \{\gamma + \beta(1-\gamma) \}\Sigma \quad \mbox{in $L^2$,}
\label{eq:LLNCLTERWSincr}
\end{align}
where $\Sigma$ is defined in \eqref{eq:ERWSSigmanLimit}. 
\end{theorem}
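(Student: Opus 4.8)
The plan is to mirror the conditional decomposition already used in the proofs of Theorems \ref{thm:CLT} and \ref{thm:StopCLT}. Reading $\Xi_n = \sum_{k=1}^n \{Y_k^{(n)}\}^2$ as the number of moves of the walk $\{S_k^{(n)}\}_{k=1}^n$, I would first split it using \eqref{def:ERWincrsteps} into the $\mathcal{F}_{\infty}$-measurable block coming from the first $m_n$ steps and the ``fresh'' block:
\[
\Xi_n = \Sigma_{m_n} + \sum_{k=m_n+1}^n \{X_k^{(n)}\}^2 .
\]
Here it is essential to keep the first $m_n$ terms, as they produce the $\gamma \Sigma$ contribution in the stated limit. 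For $m_n < k \leq n$ the increment $X_k^{(n)}$ is nonzero exactly when the move is not a stop (probability $\beta = p+q$) and the sampled past increment $X_{U_{k,n}}$ is nonzero, so averaging over the uniform choice of $U_{k,n}$ gives
\[
E[\{X_k^{(n)}\}^2 \mid \mathcal{F}_{\infty}] = \beta \cdot \dfrac{\Sigma_{m_n}}{m_n} .
\]

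Next I would write $\Xi_n = E[\Xi_n \mid \mathcal{F}_{\infty}] + D_n$, where $D_n := \sum_{k=m_n+1}^n (\{X_k^{(n)}\}^2 - E[\{X_k^{(n)}\}^2 \mid \mathcal{F}_{\infty}])$ is, conditionally on $\mathcal{F}_{\infty}$, a sum of centered i.i.d. bounded variables, exactly as $B_n$ in \eqref{def:Bn}. For the mean part, using the identity $\gamma_n(n-m_n) = m_n(1-\gamma_n)$ the two pieces combine cleanly:
\[
\dfrac{\gamma_n E[\Xi_n \mid \mathcal{F}_{\infty}]}{(m_n)^{\beta}}
= \{\gamma_n + \beta(1-\gamma_n)\} \cdot \dfrac{\Sigma_{m_n}}{(m_n)^{\beta}} .
\]
Since $m_n \to \infty$, \eqref{eq:ERWSSigmanLimit} gives $\Sigma_{m_n}/(m_n)^{\beta} \to \Sigma$ in $L^2$, and the deterministic coefficient is bounded and converges to $\gamma + \beta(1-\gamma)$; hence the mean part converges in $L^2$ to $\{\gamma+\beta(1-\gamma)\}\Sigma$.

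It then remains to show $\gamma_n D_n/(m_n)^{\beta} \to 0$ in $L^2$. As each summand lies in $\{0,1\}$ and is centered, $E[D_n^2] = E[V[D_n \mid \mathcal{F}_{\infty}]]$ with
\[
V[D_n \mid \mathcal{F}_{\infty}]
= (n-m_n)\, \beta \dfrac{\Sigma_{m_n}}{m_n}\Bigl(1 - \beta \dfrac{\Sigma_{m_n}}{m_n}\Bigr)
\leq (n-m_n)\, \beta \dfrac{\Sigma_{m_n}}{m_n}.
\]
Using once more \eqref{eq:ERWSSigmanLimit} (whose $L^2$-convergence yields $E[\Sigma_{m_n}] = O((m_n)^{\beta})$) together with $\gamma_n^2(n-m_n) = \gamma_n(1-\gamma_n) m_n$, the normalized second moment is
\[
\dfrac{\gamma_n^2}{(m_n)^{2\beta}} E[D_n^2]
= O\!\left(\dfrac{\gamma_n(1-\gamma_n)}{(m_n)^{\beta}}\right) \to 0,
\]
because $m_n \to \infty$ and $\beta>0$. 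Combining the two parts by the triangle inequality in $L^2$ gives \eqref{eq:LLNCLTERWSincr}. I do not expect a genuine obstacle here: the only steps needing care are fixing the correct reading of $\Xi_n$ (so that the $\gamma\Sigma$ term appears), extracting both the $L^2$-limit and the boundedness of $\Sigma_{m_n}/(m_n)^{\beta}$ from \eqref{eq:ERWSSigmanLimit}, and the bookkeeping identity $\gamma_n(n-m_n)=m_n(1-\gamma_n)$ that makes the mean limit and the variance bound come out cleanly.
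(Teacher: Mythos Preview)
Your proposal is correct and follows essentially the same route as the paper: you decompose $\Xi_n$ into its $\mathcal{F}_{\infty}$-conditional mean (which the paper calls $A'_n$) and a centered remainder (the paper's $B'_n$, your $D_n$), identify the mean part as $\{\gamma_n+\beta(1-\gamma_n)\}\Sigma_{m_n}/(m_n)^{\beta}$, and kill the remainder in $L^2$ via its conditional variance. The only cosmetic difference is that the paper tracks the exact limit of $E[D_n^2]$ while you use the cruder bound $1-\beta\Sigma_{m_n}/m_n\le 1$ together with $E[\Sigma_{m_n}]=O((m_n)^{\beta})$; both lead to the same $O(\gamma_n(1-\gamma_n)/(m_n)^{\beta})\to 0$.
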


The strong law of large numbers and its refinement can be obtained also for the ERW with stops.

\begin{theorem} \label{thm:sLLN_ERWS} Under the same condition as in Theorem \ref{thm:StopCLT}, we have \eqref{statement:sLLN}. In addition, \eqref{statement:sLLNsharpened} holds for $c \in (\max\{\alpha,1/2\},1)$.
\end{theorem}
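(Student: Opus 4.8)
\textbf{Proof proposal for Theorem~\ref{thm:sLLN_ERWS}.}
The plan is to run the argument of Section~\ref{sec:ProofTheoremsLLN} in the present setting, using the decomposition $\gamma_n T_n = c_n W_{m_n} + \gamma_n B_n$ from \eqref{eq:GammanTnDecompose2}. Both this identity and the formula \eqref{eq:AnKey} for $A_n$ rest only on the conditional mean $E[X_k^{(n)}\mid\mathcal{F}_\infty] = \alpha W_{m_n}/m_n$, which holds verbatim for the ERW with stops once we read $\alpha = p-q$; hence they transfer without change. It therefore suffices to control $c_n W_{m_n}$ and $\gamma_n B_n$ separately on the scale $(m_n)^c$, after which the basic law \eqref{statement:sLLN} follows from the refined statement \eqref{statement:sLLNsharpened} by writing $T_n/n = (\gamma_n T_n/(m_n)^c)\,(m_n)^{c-1}$ with $c<1$, exactly as before.

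For the fluctuation term I would first establish the exact analogue of Lemma~\ref{lem:ProofTheoremSLLNBn}. Conditionally on $\mathcal{F}_\infty$ the variables $\{X_k^{(n)} : m_n<k\le n\}$ are still i.i.d., now valued in $\{+1,0,-1\}$, so the centered increments remain bounded (by $1+|\alpha|\le 2$) and Azuma's inequality applies as written. Only an innocuous constant $C$ in the Gaussian tail changes, and taking $x=\sqrt{C(1+\varepsilon)\gamma_n(1-\gamma_n)m_n\log n}$ together with Borel--Cantelli yields $\gamma_n B_n = O(\sqrt{\gamma_n(1-\gamma_n)m_n\log n})$ a.s.; the same computation using $m_n\le n$ then gives $\gamma_n B_n/(m_n)^c\to 0$ a.s. for every $c\in(1/2,1)$. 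This step is robust and needs no new input.

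The term $c_n W_{m_n}$ is the crux. As $c_n$ is bounded and $m_n\to\infty$, it suffices to show $W_k/k^c\to 0$ a.s. for the underlying ERW with stops. In the superdiffusive range $\alpha\in(\beta/2,\beta]$ this is immediate from \eqref{eq:Bercu22JSP(3.18)} via $W_k/k^c=(W_k/k^\alpha)\,k^{\alpha-c}$ and $c>\alpha$. In the diffusive and critical ranges $\alpha\in[-\beta,\beta/2]$ the quoted central limit theorems are insufficient: the crude bound $|W_k|\le\Sigma_k\sim\Sigma k^\beta$ only delivers $W_k/k^c\to 0$ for $c>\beta$, which misses the required window $c\in(1/2,\beta)$ whenever $\beta>1/2$. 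The remedy is a law of the iterated logarithm for $W_k$. I would use Bercu's martingale $M_k:=W_k/a_k$ with $a_k\sim k^\alpha$, whose predictable quadratic variation satisfies $\langle M\rangle_k\sim c\,k^{\beta-2\alpha}$ in the diffusive case and $\langle M\rangle_k\sim c\log k$ in the critical case by \eqref{eq:ERWSSigmanLimit}; the martingale LIL then gives $W_k=a_kM_k=O(k^{\beta/2}\sqrt{\log\log k})$ a.s. (with an extra slowly varying factor when $\alpha=\beta/2$). Since $\beta/2<1/2$, this is $o(k^c)$ for every $c>1/2$, precisely the missing range.

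Combining the two estimates through \eqref{eq:GammanTnDecompose2} gives $\gamma_n T_n/(m_n)^c\to 0$ a.s. for $c\in(\max\{\alpha,1/2\},1)$, and hence \eqref{statement:sLLN}. The main obstacle is the diffusive/critical LIL of the previous paragraph, the one ingredient not supplied by the stated results: the task is either to cite such a LIL from \cite{Bercu22} or to derive it from $M_k$ via Stout's martingale LIL, after verifying the increment condition $|\Delta M_k|=O(a_k^{-1})=O(k^{-\alpha})=o(\sqrt{\langle M\rangle_k/\log\log\langle M\rangle_k})$, which holds because that ratio is of order $k^{-\beta/2}$.
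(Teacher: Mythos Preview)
Your proposal is correct and follows essentially the same route as the paper: decompose $\gamma_n T_n = c_n W_{m_n} + \gamma_n B_n$, reuse the Azuma argument of Lemma~\ref{lem:ProofTheoremSLLNBn} for $B_n$ (the bound on the centered increments is indeed at most $2$, which is all that matters), and control $W_{m_n}$ in the diffusive and critical regimes by a law of the iterated logarithm, with the superdiffusive regime handled via \eqref{eq:Bercu22JSP(3.18)}. The LIL you flag as the ``main obstacle'' is in fact already available and quoted in the paper as \eqref{eq:Bercu22JSP(3.5)} and \eqref{eq:Bercu22JSP(3.13)} (Eqs.~(3.5) and (3.13) of \cite{Bercu22}), so no derivation via Stout is needed.
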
 

\begin{remark} Assume that $\beta \in (1/2,1)$. As a byproduct of the proof of Theorem \ref{thm:sLLN_ERWS}, we can prove the a.s. convergence in \eqref{eq:LLNCLTERWSincr}. The a.s. convergence in  \eqref{thm:StopCLT(iii)a} is valid for $\alpha \in (1/2,\beta]$.
\end{remark}

\section{Proof of Theorem \ref{thm:StopCLT}}
\label{sec:ProofStop1}


Noting that $p=(\beta+\alpha)/2$ and $q=(\beta-\alpha)/2$, for $n \in \mathbb{N}$, we have
\begin{align}
 &P(X_{n+1}= \pm 1 \mid \mathcal{F}_n) \notag \\
 &= \dfrac{\#\{k=1,\ldots,n : X_k=\pm 1\}}{n} \cdot p+  \dfrac{\#\{k=1,\ldots,n : X_k= \mp 1\}}{n} \cdot q \notag \\
&= \dfrac{1}{2} \left( \beta \cdot \dfrac{\Sigma_n}{n} \pm \alpha \cdot \dfrac{W_n}{n} \right).
\label{eq:elephantRWCondDistpStops}
\end{align}
For $k \in (m_n,n] \cap \mathbb{N}$, we have that 
\begin{align}
P(X_k^{(n)}=\pm 1 \mid \mathcal{G}_{k-1}^{(n)}) = \dfrac{1}{2} \left(\beta \cdot \dfrac{\Sigma_{m_n}}{m_n} \pm \alpha \cdot \dfrac{W_{m_n}}{m_n}\right), \label{eq:elephantRWSTOPIncrMemoryCondDistp}
\end{align}
and
\begin{align}
P(\{X_k^{(n)}\}^2= 1 \mid \mathcal{G}_{k-1}^{(n)}) 
= \beta \cdot \dfrac{\Sigma_{m_n}}{m_n}.
\label{eq:elephantRWSTOPIncrMemorySQCondDist}
\end{align}
From \eqref{eq:elephantRWSTOPIncrMemoryCondDistp}, we see that Eqs. \eqref{eq:E[X_k^n]} and \eqref{eq:CondE[center]} continue to hold in this setting. Defining $\{A_n\}$ and $\{B_n\}$ by \eqref{eq:AnBnDecomp}, we note that they satisfy Eqs. \eqref{eq:AnKey} and \eqref{eq:GammanTnDecompose2}.

We prepare a lemma about $\{B_n\}$.

\begin{lemma} \label{lem:BnStop} Under the assumption of Theorem \ref{thm:StopCLT}, we have the following: \\
(i) For $\alpha \in [-\beta,\beta]$ and $\xi \in \mathbb{R}$,
\begin{align}
E\left[ \left. \exp\left( \dfrac{i\xi \gamma_n B_n}{\sqrt{\Sigma_{m_n}}}\right) \,\right|\, \mathcal{F}_{\infty} \right] \to \exp\left(-\dfrac{\xi^2}{2} \cdot \beta \gamma(1-\gamma) \right)\quad \mbox{as $n \to \infty$ a.s.,}
\label{eq:ProofTheoremCLTStopSubCritBn}
\end{align}
and
\begin{align}
E\left[ \left. \exp\left( \dfrac{i\xi \gamma_n B_n}{\sqrt{\Sigma_{m_n}\log \Sigma_{m_n}}}\right) \,\right|\, \mathcal{F}_{\infty} \right] \to 1\quad \mbox{as $n \to \infty$ a.s.}
\label{eq:ProofTheoremCLTStopCritBn}
\end{align}
(ii) If $\alpha \in (\beta/2,\beta]$ then $\gamma_n B_n/(m_n)^{\alpha}  \to 0$ as $n \to \infty$ in $L^2$.
\end{lemma}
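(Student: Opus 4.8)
The plan is to handle the three claims separately, working throughout conditionally on $\mathcal{F}_\infty$, under which $B_n=\sum_{k=m_n+1}^n\{X_k^{(n)}-E[X_k^{(n)}\mid\mathcal{F}_\infty]\}$ is, exactly as in \eqref{def:Bn}, a sum of $n-m_n$ centered i.i.d. increments bounded by $2$. The first thing I would compute is the conditional variance of a single increment: combining \eqref{eq:elephantRWSTOPIncrMemoryCondDistp} and \eqref{eq:elephantRWSTOPIncrMemorySQCondDist} gives $E[X_k^{(n)}\mid\mathcal{F}_\infty]=\alpha\,W_{m_n}/m_n$ and $E[(X_k^{(n)})^2\mid\mathcal{F}_\infty]=\beta\,\Sigma_{m_n}/m_n$, hence
\begin{align*}
\sigma_n^2:=V[X_k^{(n)}\mid\mathcal{F}_\infty]=\beta\cdot\frac{\Sigma_{m_n}}{m_n}-\alpha^2\left(\frac{W_{m_n}}{m_n}\right)^2 .
\end{align*}
Summing over the $n-m_n$ independent increments and rescaling by $\gamma_n/\sqrt{\Sigma_{m_n}}$ yields
\begin{align*}
V\!\left[\left.\frac{\gamma_n B_n}{\sqrt{\Sigma_{m_n}}}\,\right|\,\mathcal{F}_\infty\right]=\gamma_n(1-\gamma_n)\left(\beta-\alpha^2\cdot\frac{W_{m_n}^2}{m_n\Sigma_{m_n}}\right).
\end{align*}
The point to get right is that the cross term disappears: since $|W_{m_n}|\le\Sigma_{m_n}$ deterministically (each $X_k\in\{-1,0,1\}$, so $|X_k|=X_k^2$), we have $W_{m_n}^2/(m_n\Sigma_{m_n})\le\Sigma_{m_n}/m_n\to0$ a.s. by \eqref{eq:ERWSSigmanLimit} together with $\beta<1$. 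Thus the conditional variance converges a.s.\ to $\beta\gamma(1-\gamma)$.

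For the first limit \eqref{eq:ProofTheoremCLTStopSubCritBn} I would then establish a conditional CLT for $\gamma_n B_n/\sqrt{\Sigma_{m_n}}$. The essential difference from Lemma \ref{lem:ProofTheoremCLTBn} is that here the per-step variance $\sigma_n^2\to0$ (because $\Sigma_{m_n}/m_n\to0$), so one cannot merely transcribe that proof. The cleanest route is the Lindeberg--Feller theorem applied conditionally on the full-probability event where all the a.s.\ limits above hold: on that event the array is deterministic, each summand $\gamma_n Z_k/\sqrt{\Sigma_{m_n}}$ is bounded by $2\gamma_n/\sqrt{\Sigma_{m_n}}\to0$, so the Lindeberg condition is automatic, and combined with the variance limit this gives convergence to $N(0,\beta\gamma(1-\gamma))$, equivalently \eqref{eq:ProofTheoremCLTStopSubCritBn}. (Equivalently, one can expand the conditional characteristic function as in Lemma \ref{lem:ProofTheoremCLTBn}, controlling the remainder by $E[|Z_k|^3\mid\mathcal{F}_\infty]\le2\sigma_n^2$; this extra factor $\sigma_n^2$ makes the relative third-order error of size $\gamma_n/\sqrt{\Sigma_{m_n}}\to0$.) The degenerate cases $\gamma\in\{0,1\}$ require no separate treatment, the limiting variance being $0$.

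The second limit \eqref{eq:ProofTheoremCLTStopCritBn} is immediate from the variance computation: the extra logarithmic factor gives $V[\gamma_n B_n/\sqrt{\Sigma_{m_n}\log\Sigma_{m_n}}\mid\mathcal{F}_\infty]=(\log\Sigma_{m_n})^{-1}\,V[\gamma_n B_n/\sqrt{\Sigma_{m_n}}\mid\mathcal{F}_\infty]\to0$ a.s.\ since $\Sigma_{m_n}\to\infty$; as $B_n$ is conditionally centered, the conditional characteristic function is within $|\xi|$ times the conditional standard deviation of $1$, yielding the claim. For part (ii) I would instead pass to unconditional second moments. From $\sigma_n^2\le\beta\,\Sigma_{m_n}/m_n$ and $E[B_n^2]=(n-m_n)E[\sigma_n^2]$ one obtains $E[(\gamma_n B_n/(m_n)^{\alpha})^2]\le\gamma_n(1-\gamma_n)\,\beta\,E[\Sigma_{m_n}]\,(m_n)^{-2\alpha}$, and since the $L^2$-convergence in \eqref{eq:ERWSSigmanLimit} gives $E[\Sigma_{m_n}]=O((m_n)^{\beta})$, this is $O((m_n)^{\beta-2\alpha})\to0$ because $\alpha>\beta/2$; this is the $L^2$-convergence feeding into \eqref{thm:StopCLT(iii)a}.

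The main obstacle is the CLT in part (i): the per-step conditional variance degenerates, so the argument must simultaneously pin down the limiting variance $\beta\gamma(1-\gamma)$ — where the $\alpha^2$ term must be shown to vanish through $|W_{m_n}|\le\Sigma_{m_n}$ and $\beta<1$ — and verify a Lindeberg-type negligibility condition under the random, slowly growing normalization $\sqrt{\Sigma_{m_n}}\asymp(m_n)^{\beta/2}$. Everything else reduces to the variance identity and routine moment bounds.
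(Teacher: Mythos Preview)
Your proof is correct and, at the key variance step, takes a cleaner route than the paper. To kill the $\alpha^2$ cross term in $V[\gamma_n B_n/\sqrt{\Sigma_{m_n}}\mid\mathcal{F}_\infty]$, the paper rewrites it as $\alpha^2\cdot\dfrac{(m_n)^\beta}{\Sigma_{m_n}}\cdot\Bigl(\dfrac{W_{m_n}}{(m_n)^{(1+\beta)/2}}\Bigr)^2$ and then runs a three-case split on $\alpha$, invoking the LILs \eqref{eq:Bercu22JSP(3.5)}, \eqref{eq:Bercu22JSP(3.13)} and the supercritical limit \eqref{eq:Bercu22JSP(3.18)} from \cite{Bercu22} to conclude $W_{m_n}/(m_n)^{(1+\beta)/2}\to0$ a.s.; your use of the deterministic inequality $|W_{m_n}|\le\Sigma_{m_n}$ together with $\Sigma_{m_n}/m_n\to0$ (from \eqref{eq:ERWSSigmanLimit} and $\beta<1$) is shorter, uniform in $\alpha$, and avoids importing those finer fluctuation results. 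For the characteristic-function convergence the paper simply refers back to an earlier lemma; your remark that the per-step conditional variance $\sigma_n^2\to0$ here, unlike in Lemma~\ref{lem:ProofTheoremCLTBn}, and your explicit handling via Lindeberg--Feller (or the third-moment bound $E[|Z_k|^3\mid\mathcal{F}_\infty]\le2\sigma_n^2$) make the argument more transparent. For part~(ii) the paper keeps the exact expression for $E[(\gamma_n B_n/(m_n)^\alpha)^2]$ and quotes precise asymptotics for $E[(W_n)^2]$ and $E[\Sigma_n]$ from \cite{Bercu22}; your upper bound $\sigma_n^2\le\beta\,\Sigma_{m_n}/m_n$ is simpler and already suffices.
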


\begin{proof}
Note that $E[ B_n \mid \mathcal{F}_{\infty}] = 0$. By \eqref{eq:elephantRWSTOPIncrMemoryCondDistp} and \eqref{eq:elephantRWSTOPIncrMemorySQCondDist}, \begin{align}
V[ X_k^{(n)} \mid \mathcal{F}_{\infty}] 
&= \beta \cdot \dfrac{\Sigma_{m_n}}{m_n}- \alpha^2 \cdot \left(\dfrac{W_{m_n}}{m_n}\right)^2
\label{eq:CondVarXknStop} 
\end{align}
for $k \in (m_n,n] \cap \mathbb{N}$. As in \eqref{eq:CondVarBnNormalized}, we have
\begin{align}
V[  B_n \mid \mathcal{F}_{\infty} ] 
&= (n-m_n) \cdot  \left\{\beta \cdot \dfrac{\Sigma_{m_n}}{m_n} - \alpha^2 \cdot \left(\dfrac{W_{m_n}}{m_n}\right)^2\right\}  \notag \\
&=  \dfrac{1-\gamma_n}{\gamma_n} \cdot \left\{\beta \Sigma_{m_n}  - \alpha^2 \cdot \dfrac{(W_{m_n})^2}{m_n}\right\}. \label{eq:CondVarB_nStop}  
\end{align}
From this, 
\begin{align}
V\left[ \left. \dfrac{\gamma_n B_n}{\sqrt{\Sigma_{m_n}}} \,\right| \,\mathcal{F}_{\infty} \right] 
&=\gamma_n(1-\gamma_n) \cdot  \left\{\beta   - \alpha^2 \cdot \dfrac{(m_n)^{\beta}}{\Sigma_{m_n}} \cdot \left( \dfrac{W_{m_n}}{(m_n)^{(1+\beta)/2}} \right)^2 \right\}.
\label{eq:CondVarB_nSigmaNormalizedStop} 
\end{align}
For any $\beta \in (0,1)$ and $\alpha \in [-\beta,\beta]$, we show that
\begin{align}
\lim_{n \to \infty} \dfrac{W_{m_n}}{(m_n)^{(1+\beta)/2}} = 0 \quad \mbox{a.s.}
\label{eq:CondVarB_nLemma}
\end{align}
Indeed, if $\alpha \in [-\beta,\beta/2)$ then
\begin{align}
\limsup_{n \to \infty} \dfrac{W_n}{\sqrt{2 n^{\beta} \log \log n}} = \sqrt{\dfrac{\beta \Sigma}{\beta-2\alpha}}\quad \mbox{a.s.}
\label{eq:Bercu22JSP(3.5)}
\end{align}
by Eq. (3.5) in \cite{Bercu22}. If $\alpha=\beta/2$ then 
\begin{align}
\limsup_{n \to \infty} \dfrac{W_n}{\sqrt{2 n^{\beta} \log n \log \log \log n}} = \sqrt{\beta \Sigma}\quad \mbox{a.s.}
\label{eq:Bercu22JSP(3.13)}
\end{align}
by Eq. (3.13) in \cite{Bercu22}. If $\alpha \in (\beta/2,\beta]$ then $W_{m_n}/(m_n)^{\alpha} \to M$ as $n \to \infty$ a.s. by \eqref{eq:Bercu22JSP(3.18)}, and $(1+\beta)/2 > \alpha$ since $2\alpha - \beta\leq \beta <1$.
In any case  we have \eqref{eq:CondVarB_nLemma}.
Since $(m_n)^{\beta}/\Sigma_{m_n}\to 1/\Sigma$ as $n \to \infty$ a.s. by \eqref{eq:ERWSSigmanLimit}, we see that \eqref{eq:CondVarB_nSigmaNormalizedStop} converges to $\beta \gamma(1-\gamma)$ as $n \to \infty$ a.s., and
\begin{align*}
V\left[ \left. \dfrac{\gamma_n B_n}{\sqrt{\Sigma_{m_n} \log \Sigma_{m_n}}} \,\right| \,\mathcal{F}_{\infty} \right] \to 0 \quad \mbox{as $n \to \infty$ a.s.}
\end{align*}
By a similar computation as in Lemma \ref{lem:ProofTheoremSLLNBn}, we obtain \eqref{eq:ProofTheoremCLTStopSubCritBn} and \eqref{eq:ProofTheoremCLTStopCritBn} in (i).

Next we consider (ii). 
By \eqref{eq:CondVarB_nStop}, 
\begin{align*}
E\left[ \left(\dfrac{\gamma_n B_n}{(m_n)^{\alpha}}\right)^2 \right] 
&= \gamma_n (1-\gamma_n) \cdot \left\{\beta \cdot \dfrac{E[\Sigma_{m_n}]}{(m_n)^{2\alpha}}  - \alpha^2 \cdot \dfrac{E[(W_{m_n})^2]}{(m_n)^{1+2\alpha}}\right\}. 
\end{align*}
From Eq. (A.6) in \cite{Bercu22}, $E[(W_n)^2] \sim n^{2\alpha}/\{(2\alpha-\beta)\Gamma(2\alpha)\}$ as $n \to \infty$.
On the other hand, from Eq. (4.4) in \cite{Bercu22} we can see that $E[\Sigma_n] \sim n^{\beta}/\Gamma(1+\beta)$ as $n \to \infty$.
Noting that $\beta<2\alpha$, we have (ii). 
\end{proof}

Assume that $\alpha \in [-\beta,\beta/2)$. By \eqref{eq:AnKey} and \eqref{eq:Bercu22JSP(3.8)}, we have that
\begin{align*}
\dfrac{\gamma_n A_n}{\sqrt{\Sigma_{m_n}}} = \dfrac{c_nW_{m_n}}{\sqrt{\Sigma_{m_n}}}  
\stackrel{\text{d}}{\to}  \{  \gamma + \alpha (1-\gamma)\}  \cdot N\left(0,\dfrac{\beta}{\beta-2\alpha}\right)\quad \mbox{as $n \to \infty$.}
\end{align*}
Combining this and \eqref{eq:ProofTheoremCLTStopSubCritBn},
we can prove \eqref{thm:StopCLT(i)} by the same method as for \eqref{thm:CLT(i)}. Next we consider the case $\alpha=\beta/2$. By \eqref{eq:AnKey} and \eqref{eq:Bercu22JSP(3.16)}, we have that
\begin{align*}
\dfrac{\gamma_n A_n}{\sqrt{\Sigma_{m_n}\log \Sigma_{m_n}}} &= \dfrac{c_nW_{m_n}}{\sqrt{\Sigma_{m_n}\log \Sigma_{m_n}}}  
\stackrel{\text{d}}{\to} \{  \gamma + \alpha (1-\gamma)\}  \cdot   N\left(0,1\right)\quad \mbox{as $n \to \infty$.}
\end{align*}
This together with \eqref{eq:ProofTheoremCLTStopCritBn}
gives \eqref{thm:StopCLT(ii)}.
As for the case $\alpha \in (\beta/2,\beta]$, by \eqref{eq:AnKey} and \eqref{eq:Bercu22JSP(3.18)},
\begin{align*}
\dfrac{\gamma_n A_n}{(m_n)^{\alpha}} &=  \dfrac{c_nW_{m_n}}{(m_n)^{\alpha}}
\to  \{\gamma+\alpha(1-\gamma)\}M \quad \mbox{as $n \to \infty$ a.s. and in $L^2$.}
\label{eq:AnSuperdiffusiveASL2Stop}
\end{align*}
Now \eqref{thm:StopCLT(iii)a} follows from Lemma \ref{lem:BnStop} (ii). The proof of \eqref{thm:StopCLT(iii)b} is almost identical to that of \eqref{thm:CLT(iii)b}: Use \eqref{eq:GammanTnDecompose2}, \eqref{eq:Bercu22JSP(3.26)}, and 
\eqref{eq:ProofTheoremCLTStopSubCritBn}.


\section{Proof of Theorem \ref{thm:LLNCLTERWSincr}}
\label{sec:ProofStop2}

Put $A'_n:=E[\Xi_n \mid \mathcal{F}_{\infty}]$ and $B'_n:= \Xi_n - A'_n$. 
Using \eqref{eq:elephantRWSTOPIncrMemorySQCondDist}, we can see that $\gamma_n A'_n = c_n(\beta) \Sigma_{m_n}$, which together with \eqref{eq:ERWSSigmanLimit} imply
\begin{align*}
\dfrac{\gamma_n A'_n}{(m_n)^{\beta}} &= 
\dfrac{c_n(\beta) \Sigma_{m_n}}{(m_n)^{\beta}}\to \{ \gamma+\beta(1-\gamma)\} \cdot \Sigma \quad \mbox{as $n \to \infty$ a.s. and in $L^2$.}
\end{align*}
As for $B'_n$, again by \eqref{eq:elephantRWSTOPIncrMemorySQCondDist} we can see that
\begin{align*}
V\left[ \left. \left(\dfrac{\gamma_n B'_n}{(m_n)^{\beta}}\right)^2 \,\right| \, \mathcal{F}_{\infty}\right] &= \dfrac{(\gamma_n)^2}{(m_n)^{2\beta}} \cdot \sum_{k=m_n+1}^n V[\{ X_k^{(n)}\}^2 \mid \mathcal{F}_{\infty}] \\
&= \dfrac{(\gamma_n)^2}{(m_n)^{2\beta}} \cdot (n-m_n) \cdot \beta \cdot \dfrac{\Sigma_{m_n}}{m_n} \cdot \left( 1- \beta \cdot \dfrac{\Sigma_{m_n}}{m_n} \right),
\end{align*}
and
\begin{align*}
E\left[ \left(\dfrac{\gamma_n B'_n}{(m_n)^{\beta}}\right)^2\right] &= \dfrac{\beta \gamma_n(1-\gamma_n)}{(m_n)^{\beta}} \cdot E\left[ \dfrac{\Sigma_{m_n}}{(m_n)^{\beta}}\cdot \left( 1- \beta \cdot \dfrac{\Sigma_{m_n}}{m_n} \right)\right].
\end{align*}
Since $\beta<1$ and $\Sigma_{m_n}/(m_n)^{\beta}$ converges to $\Sigma$ in $L^2$ by \eqref{eq:ERWSSigmanLimit}, we have 
\begin{align*}
E\left[ \dfrac{\Sigma_{m_n}}{(m_n)^{\beta}}\cdot \left( 1- \beta \cdot \dfrac{\Sigma_{m_n}}{m_n} \right)\right] 
&= E\left[ \dfrac{\Sigma_{m_n}}{(m_n)^{\beta}} \right] - \dfrac{\beta}{(m_n)^{1-\beta}} \cdot E\left[ \left(\dfrac{\Sigma_{m_n}}{(m_n)^{\beta}} \right)^2\right]  \\
&\to E[\Sigma] \quad \mbox{as $n \to \infty$.}
\end{align*}
Noting that $\beta>0$, this shows that $\gamma_n B'_n/(m_n)^{\beta} \to 0$ as $n \to \infty$ in $L^2$,
which completes the proof. 

\section{Proof of Theorem \ref{thm:sLLN_ERWS}}

The proof of Lemma \ref{lem:ProofTheoremSLLNBn} is based on the fact $|X_k^{(n)}-E[X_k^{(n)}\mid \mathcal{F}_{\infty}]| \leq 1$. Thus $\{B_n\}$ for the ERW with stops in the triangular array setting also satisfies \eqref{eq:ProofTheoremSLLNBnCruder} for any $c \in (1/2,1)$. If $\alpha \in [-\beta,\beta/2]$ then from \eqref{eq:AnKey}, \eqref{eq:Bercu22JSP(3.5)}, and \eqref{eq:Bercu22JSP(3.13)}, we can see that $\gamma_n A_n=o(n^c)$ for any $c \in (\beta/2,1)$. If $\alpha \in (\beta/2,\beta]$ then \eqref{eq:AnKey} and \eqref{eq:Bercu22JSP(3.18)} imply that $\gamma_n A_n=o(n^c)$ for any $c \in (\alpha,1)$. In any case \eqref{statement:sLLNsharpened} holds for $c \in (\max\{\alpha,1/2\},1)$.





\end{document}